\documentclass[twoside,a4paper]{amsart}
\usepackage{amssymb, amsmath,latexsym, amsthm}
\usepackage{fancyhdr}

\newtheoremstyle{dotless}{}{}{\itshape}{}{\bfseries}{}{ }{}
\theoremstyle{dotless}
\newtheorem{theorem}{Theorem}
\newtheorem*{theoremm}{Theorem 2$^\prime$}
\newtheorem{theoremmm}{Theorem}
\newtheorem{lemma}{Lemma}
\theoremstyle{definition}

\DeclareMathOperator{\dist}{dist}

\title{On a theorem of M. Cartwright in higher dimensions}
\author{A. Logunov}
\address{Chebyshev Laboratory at Saint-Petersburg State University, 198904, Saint-Petersburg, Russia}
\email{log239@yandex.ru}
\author{E. Malinnikova}
\address{Department of Mathematical Sciences, Norwegian University of Science and Technology, NO-7491, Trondheim, Norway}
\email{eugenia.malinnikova@math.ntnu.no}
\author{P. Mozolyako}
\address{Department of Mathematical Sciences, Norwegian University of Science andechnology, NO-7491, Trondheim, Norway}
\email{pavel.mozolyako@math.ntnu.no}
\keywords{\small{Harmonic functions, growth classes, radial weights}}
\thanks{
A.L. was supported by the Chebyshev Laboratory (Department of Mathematics and Mechanics, St. Petersburg State University) under RF government grant 11.G34.31.0026 and JSC "Gazprom Neft". E.M. was supported by Research Council of Norway, grant 213638. P.M. was supported by Research Council of Norway, grant 204726/V30.}
\subjclass[2010]{31B05,31B25}
\begin{document}

\begin{abstract}
We consider harmonic functions in the unit ball of $\mathbb{R}^{n+1}$ that are unbounded near the boundary but can be estimated from above by some (rapidly increasing) radial weight $w$. Our main result gives some conditions on $w$ that guarantee the estimate from below  on the harmonic function by a multiple of this weight. In dimension two this reverse estimate was first obtained by  M.~Cartwright for the case of the power weights, $w_p(z)=(1-|z|)^{-p},\ p>1$, and then generalized to a wide class of regular weights by a number of authors.  
\end{abstract}
\maketitle
\renewcommand{\thetheoremmm}{\Alph{theoremmm}}
\normalsize
\section{Introduction}
A well-known theorem by M. Cartwright \cite{Car:33} states that if a function $u$, harmonic in the unit disk, $u(0) = 0$, satisfies the one-sided growth condition
\begin{equation}\notag
u(z) \leq w(1-|z|),\quad z\in \mathbb{D},
\end{equation}
where $w(t) = \frac{1}{t^p}$ for some $p>1$, then the reverse inequality holds
\begin{equation}\notag
u(z) \geq -Cw(1-|z|),\quad z\in \mathbb{D},
\end{equation}
where $C$ depends only on $p$. This result was later refined (\cite{Car:35,L:56,L:62}) and extended by M.~ Cartwright herself and C.N.~Linden to more general weights.
The works by  N.~Nikolskii (\cite{Nik}) and A.~Borichev (\cite{Bor:03}) should also be mentioned, the latter in particular, where a very nice estimate $u(z) \geq -(1+o(1))w(1-|z|)$ was obtained for sufficiently fast growing weights (see \cite[section 1.3]{Bor:03});  some estimates for the constant in the reverse inequality were also given  earlier in \cite{L:62}. The techniques used in all of the works mentioned above involve analytic functions and conformal mappings and are therefore limited to the complex plane. However, it is natural to ask if similar results hold for harmonic functions in higher dimensions, related problems in higher dimensions were studied by P.J. Rippon, K. Samotij and B. Korenblum, see \cite{R:82, S:86, S:87, KRS}. In this paper we extend the theorem of Cartwright to harmonic functions in the unit ball in $\mathbb{R}^{n+1}$. The result holds for a large class of (regular) weights.

Let $w:\mathbb{R}_+\mapsto\mathbb{R}_+$ be a strictly decreasing function, such that
$\lim_{y\rightarrow 0}w(y)=\infty$ and $w(1) =1$.
Furthermore we assume that $w\in C^2$ and the following growth and regularity conditions are satisfied
\begin{equation}\label{e:als}
\lim_{y\rightarrow0}\frac{w(y)}{w'(y)} = 0,
\end{equation}
and
\begin{equation}\label{e:ar}
\left(\frac{w(y)}{w'(y)}\right)'\geq -\frac{1-\delta}{n}, \quad 0< y < 1,
\end{equation}
for some positive $\delta$.
Our main result is the following
\begin{theorem}\label{t:th1}

Let $U$ be a harmonic function in the unit ball $B\subset \mathbb{R}^{n+1}$, $U(0)=0$. Assume that $U$ admits the growth condition
\begin{equation}\label{e:ugc}
U(z) \leq w(1-|z|),\quad z\in B,
\end{equation}
where the weight $w$ satisfies \eqref{e:als} and \eqref{e:ar} above.
 Then the following two-sided estimate holds
\begin{equation}\label{e:utsgc}
\left|U(z)\right| \leq Cw(1-|z|),\quad z \in B,
\end{equation}
where the constant $C = C(n,\delta)$ depends only on the parameter $\delta$ and dimension $n$.
\end{theorem}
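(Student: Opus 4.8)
The plan is to reduce the theorem to a one‑sided pointwise estimate for $U_-:=\max(-U,0)$ and then to establish that estimate by propagating the normalization $U(0)=0$ outward along a geometric sequence of scales adapted to the weight.

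Since $U$ is harmonic and $U(0)=0$, the mean value property gives $\int_{S_r}U_+\,d\sigma=\int_{S_r}U_-\,d\sigma$ for all $r\in(0,1)$, where $S_r=\{|z|=r\}$ and $d\sigma$ is normalized surface measure. By \eqref{e:ugc} we have $U_+\le w(1-r)$ on $S_r$, hence
\[
\int_{S_r}U_-\,d\sigma\le w(1-r),\qquad 0<r<1.
\]
As $U_+$ is already controlled by \eqref{e:ugc}, the theorem is equivalent to the pointwise estimate $U_-(z)\le C(n,\delta)\,w(1-|z|)$ on $B$. Introduce $\phi(y):=-w(y)/w'(y)>0$: condition \eqref{e:als} says $\phi(y)\to0$ as $y\to0$, while \eqref{e:ar} says $\phi'\le\frac{1-\delta}{n}$, and the two together give $\phi(y)\le\frac{1-\delta}{n}\,y<y$. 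The function $\phi$ is the natural length scale: since $(\log w)'=-1/\phi$ and $\phi$ is Lipschitz, $w(y-c\,\phi(y))\le C_0(n,\delta)\,w(y)$ for a suitably small absolute $c$, and the inequality $\phi(y)<y$ guarantees that the concentric ball $B_\rho$ with $\rho=|z_0|+c\,\phi(1-|z_0|)$ stays inside $B$, with $w(1-|z|)\le C_0\,w(1-|z_0|)$ on it.

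The naive estimate is too weak. Applying Harnack's inequality on $B_\rho$ to the positive harmonic function $C_0\,w(1-|z_0|)-U$, which equals $C_0\,w(1-|z_0|)$ at the origin, gives only
\[
U_-(z_0)\le C_0\,w(1-|z_0|)-U(z_0)\lesssim \phi(1-|z_0|)^{-n}\,w(1-|z_0|),
\]
i.e. it loses the divergent factor $\phi(1-|z_0|)^{-n}$ (the same loss appears if one instead estimates a Poisson integral on $B_\rho$ by $\|P_\rho\|_\infty\int U_-\,d\sigma$). Removing this factor is the heart of the matter and the main obstacle: one must exploit the mass bound on $U_-$ not at a single radius but at all scales simultaneously. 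Fix $z_0$, put $\zeta_0=z_0/|z_0|$, and let $t_0>t_1>\dots>t_N=1-|z_0|$ run from $t_0\asymp1$ down to $1-|z_0|$ with $w(t_{j+1})\asymp2\,w(t_j)$ (so, by the slow variation, $t_j-t_{j+1}\asymp\phi(t_j)$, and only a controlled number of scales separates any two radii). One then estimates $U_-$ near $\zeta_0$ at each scale by splitting the relevant Poisson kernel into dyadic spherical caps about $\zeta_0$ and bounding the mass of $U_-$ on each cap inductively --- the inductive hypothesis being a local estimate of the form $\int_{B(\zeta_0,t)\cap S_r}U_-\,d\sigma\lesssim t^{\,n}\,w(1-r)$ whose implicit constant is transported from one scale to the next. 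The subharmonicity of $U_-$ together with the mass bound $\int_{S_r}U_-\,d\sigma\le w(1-r)$ lets one pass this information through each scale with a multiplicative loss strictly less than $1$ --- this is where the \emph{strict} inequality in \eqref{e:ar} enters (the parameter $\delta>0$), and it is the exact higher‑dimensional analogue of Cartwright's $p>1$, since for $w=y^{-p}$ condition \eqref{e:ar} reads $p\ge n/(1-\delta)$. Condition \eqref{e:als} is used to make the scale $\phi$ well behaved down to the boundary. Summing the resulting geometric series (bounded by $C(n,\delta)$ independently of $N$, hence of $w$) yields $U_-(z_0)\le C(n,\delta)\,w(1-|z_0|)$.

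The principal difficulty, as indicated, is the passage from the one‑ball estimate to the scale‑by‑scale induction: one has to show quantitatively that a concentration of $U_-$ near a boundary point would, through subharmonicity and the mean‑value constraint, propagate to a coarser scale and there contradict $\int_{S_r}U_-\,d\sigma\le w(1-r)$, and one must carry this out with all constants depending on $n$ and $\delta$ only --- so that the bound persists even where $|z|$ is bounded away from the boundary, a region in which $w(1-|z|)$ itself is bounded below only in terms of $n$ and $\delta$. The dimensional constant $\frac{1-\delta}{n}$ in \eqref{e:ar} is precisely what makes the iteration converge; without the $\delta$‑gap the per‑scale losses would accumulate.
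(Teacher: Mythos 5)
You correctly reduce the theorem to the bound $U_-(z)\le C(n,\delta)w(1-|z|)$, identify the right length scale $\phi=-w/w'$, and see that the naive Harnack/Poisson estimate on a ball of radius $\phi(1-|z_0|)$ loses a factor $\phi^{-n}$, which must be recovered by a \emph{local} mass bound of the form $\int_{B(\zeta_0,t)\cap S_r}U_-\,d\sigma\lesssim t^n w(1-r)$ at scale $t\sim\phi(1-r)$. This matches the paper's reduction (its Theorem 4, proved exactly by Harnack on a ball of radius $2\alpha(\theta)$ with $\alpha=-w/(10w')$, plus Lemmas 2--3 translating \eqref{e:als} and \eqref{e:ar} into doubling of $w$ at scale $\alpha$ and the integral bound $\int_0^1(w(y(1-\theta)+\theta)/y)^{1/(n+1)}dy\lesssim_\delta w(\theta)^{1/(n+1)}\alpha(\theta)^{n/(n+1)}$). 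Up to this point your argument is sound and essentially the paper's.

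The gap is in the proof of the local mass bound itself, which is the entire technical content of the theorem. Your ``scale-by-scale induction'' asserts that subharmonicity of $U_-$ plus the global constraint $\int_{S_r}U_-\,d\sigma\le w(1-r)$ lets one transport the cap estimate from radius $1-t_j$ to radius $1-t_{j+1}$ ``with a multiplicative loss strictly less than $1$,'' but no mechanism is given: when you push $U_-$ forward by the Poisson kernel across a shell of width $\phi(t_j)$, the kernel only localizes to angular scale $\phi(t_j)$, whereas the inductive hypothesis at the new radius is needed down to scale $\phi(t_{j+1})$, and the far-field tail contributes a term of the same order as the quantity being bounded --- so the step as described produces $M_{j+1}\le CM_j$ with $C>1$, which is useless over the unboundedly many scales between $1$ and $1-|z_0|$. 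Moreover, the claim that the $\delta$-gap in \eqref{e:ar} is what makes the per-scale loss contract is not substantiated, and it does not match how $\delta$ actually enters: in the paper $\delta$ only controls the constant $D$ in the integral condition $\int_0^1(k(y)/y)^{1/(n+1)}dy\le D$, while the contraction is achieved in a \emph{single} global step, not an iteration. Concretely, the paper constructs an axially symmetric domain $\Omega$ bounded by the cap $A(0,\beta)$ and a surface $\Gamma_a$ given by $\phi(x,\eta)-\beta=(y/k(y))^{1/(n+1)}$, together with a positive harmonic majorant $v_a$ with boundary data $k(y)$ on $\Gamma_a$, satisfying $v_a(0)\lesssim\lambda^{1/(n+1)}$ and $v_a\gtrsim\mu(\cdot,\cdot,\beta)$ on $\Gamma_a$; comparing $u_a$ with $(1+K)v_a$ on $\partial\Omega$ and applying the maximum principle yields the self-improving inequality $K\le C(1+K)\lambda^{1/(n+1)}$, which absorbs $K$ after one normalization. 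Some substitute for this comparison-function (or harmonic-measure) argument is indispensable; without it your outline restates the difficulty rather than resolving it.
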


The conditions \eqref{e:als} and \eqref{e:ar} assure that the weight $w$ grows relatively fast as $|z|\rightarrow 1$ and is regular. The natural regularity for majorants of harmonic functions is logarithmic convexity, however it is shown in \cite[Proposition 4.1]{Bor:03} that some additional regularity of the weight is necessary for Theorem \ref{t:th1} to hold.\par For the rate of the growth, the weight $w_0(y)=y^{-n}$ is the natural threshold in this result. We see that $w(y)=y^{-p}$ satisfies \eqref{e:als} and \eqref{e:ar} if and only if $p>n$. The result also fails when $p\le n$, one can easily see that the Poisson kernel for the ball $B$ is strictly positive, but grows like $w_0$ near its singularity at the boundary. There is no upper bound on the growth of $w$ (for example $w = \exp(\frac{1}{y})$ or $w = \exp(\exp(\exp(\frac{1}{y})))$ fit well into \eqref{e:als} and \eqref{e:ar} for small enough $y$, see section \ref{ss:sc} for further discussion) as long as the weight $w$ is \textquotedblleft regular\textquotedblright. The nature of conditions \eqref{e:als} and \eqref{e:ar}  is further discussed in section \ref{ss:sc}.\par
For the case of the weight $w_0$ we have the following generalization of the two-dimensional result of Cartwright.
\begin{theorem}\label{th:N3}
Let $U$ be a harmonic function in the unit ball $B\subset \mathbb{R}^{n+1}$,  $U(0)=0$. Assume that
\begin{equation}\notag
U(z)\leq (1-|z|)^{-n},\quad z\in B.
\end{equation}
Then
\begin{equation}\notag
|U(z)|\leq C(1-|z|)^{-n}\left(\log\frac1{1-|z|}\right)^{n+1},\quad |z|>1/2
\end{equation}
where $C$ depends on $n$ only.
\end{theorem}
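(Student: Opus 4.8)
The plan is to obtain Theorem~\ref{th:N3} as a limiting case of Theorem~\ref{t:th1}, applied to the family of power weights. Recall from the introduction that $w(y)=y^{-p}$ satisfies \eqref{e:als} and \eqref{e:ar} precisely when $p>n$; moreover $w_\delta(y):=y^{-n/(1-\delta)}$ is the \emph{slowest growing} weight admissible with parameter $\delta$ — integrating \eqref{e:ar} from $0$ together with \eqref{e:als} gives $w/w'\ge-(1-\delta)y/n$, hence $(\log w)'\le-n/((1-\delta)y)$ and $w(y)\ge y^{-n/(1-\delta)}$ — so this family reaches the threshold $y^{-n}$ in the limit $\delta\to0$. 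Since $0<1-|z|<1$ and $n/(1-\delta)>n$, the hypothesis $U(z)\le(1-|z|)^{-n}$ gives $U(z)\le(1-|z|)^{-n/(1-\delta)}=w_\delta(1-|z|)$, so Theorem~\ref{t:th1} applies and yields
\begin{equation}\label{e:bpow}
|U(z)|\le C(n,\delta)\,(1-|z|)^{-n/(1-\delta)}=C(n,\delta)\,(1-|z|)^{-n}\exp\!\left(\tfrac{n\delta}{1-\delta}\log\tfrac1{1-|z|}\right),\qquad z\in B .
\end{equation}

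Fix now $z$ with $|z|>1/2$, so that $L:=\log\frac1{1-|z|}>\log2$, and optimise the free parameter $\delta$ in \eqref{e:bpow}. Choose $\delta=\min\!\big(\delta_0,(n+1)/(nL)\big)$, where $\delta_0=\delta_0(n)$ is a small fixed constant. When $L$ is large this equals $(n+1)/(nL)$, and then $\frac{n\delta}{1-\delta}L\le(n+1)/(1-\delta_0)$, so the exponential factor in \eqref{e:bpow} is bounded by a constant depending only on $n$; when $L$ is bounded we take $\delta=\delta_0$, and \eqref{e:bpow} already gives $|U(z)|\le C(n,\delta_0)(1-|z|)^{-n/(1-\delta_0)}\le C_n$, which is majorised by $C_n(1-|z|)^{-n}(\log\frac1{1-|z|})^{n+1}$ on that range. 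In the main case we are thus left with $|U(z)|\le C_n\,C\big(n,(n+1)/(nL)\big)\,(1-|z|)^{-n}$, and the proof is complete provided one knows
\begin{equation}\label{e:bcst}
C(n,\delta)\le C_n\,\delta^{-(n+1)},\qquad 0<\delta<\delta_0 ,
\end{equation}
for then $C\big(n,(n+1)/(nL)\big)\le C_n L^{n+1}$ and \eqref{e:bpow} becomes exactly the claimed inequality.

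The step not contained in the statement of Theorem~\ref{t:th1}, and the place where I expect the real work to lie, is the quantitative bound \eqref{e:bcst}: Theorem~\ref{t:th1} is formulated with an unspecified constant, so one must revisit its proof and record how $\delta$ enters each estimate. Morally, the slack $\delta$ should be consumed, a bounded number of times, in summing geometric‑type series over the $\sim\log\frac1{1-|z|}$ dyadic scales between $|z|$ and the boundary (or in closing an induction), each use costing a factor $\delta^{-1}$; getting the total loss down to $\delta^{-(n+1)}$ — which is what produces the exponent $n+1$ on the logarithm, matching the ambient dimension — is the crux. An essentially equivalent presentation, avoiding the black‑box use of Theorem~\ref{t:th1}, is to rerun its proof directly for $w_0(y)=y^{-n}$: there \eqref{e:ar} holds with equality, $(w_0/w_0')'\equiv-1/n$, so every sum that converged thanks to the strict inequality degenerates into a sum of $\sim\log\frac1{1-|z|}$ comparable terms, and iterating this through the argument produces the factor $(\log\frac1{1-|z|})^{n+1}$. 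Either way, the only genuinely new ingredient beyond Theorem~\ref{t:th1} is the bookkeeping of the $\delta$‑dependence.
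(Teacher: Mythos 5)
Your reduction is sound in outline: $w_\delta(y)=y^{-n/(1-\delta)}$ does satisfy \eqref{e:als} and \eqref{e:ar} (with equality in the latter), it majorises $y^{-n}$ on $(0,1)$, and optimising $\delta\sim(\log\frac1{1-|z|})^{-1}$ would convert a bound $C(n,\delta)\le C_n\,\delta^{-(n+1)}$ into the stated logarithmic factor. But that bound is exactly where the content of Theorem \ref{th:N3} beyond Theorem \ref{t:th1} lies, and you do not prove it: you explicitly defer it as ``the crux'' and offer only a heuristic (summing geometric series over dyadic scales) that does not correspond to any identifiable step of the argument. As written, the proposal establishes Theorem \ref{th:N3} only modulo an unproven quantitative strengthening of Theorem \ref{t:th1}, which is a genuine gap. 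For the record, the claimed dependence is true and can be extracted from the paper's proof of Theorem \ref{t:th1}: the parameter $\delta$ enters only through Lemma \ref{l:lh2}, which yields the constant $D=\frac{n+1}{n}+\frac{40(n+1)}{\delta}$ in the integral condition \eqref{e:kav.2}; Theorem \ref{l:lh3} then loses a factor $D^{n+1}$ in \eqref{e:tsae}, and Theorem \ref{t:th2.5} turns this into a lower bound whose constant is linear in $D^{n+1}$. Hence $C(n,\delta)\le C_n\delta^{-(n+1)}$ indeed holds, and your optimisation would then close the argument --- but none of this bookkeeping is in your write-up.

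The paper avoids the limiting family of weights altogether. It applies Theorem \ref{l:lh3} directly with $\tilde k(y)=(\theta+y(1-\theta))^{-n}$ and $\beta=\theta/4$, computes $\int_0^1(\tilde k(y)/y)^{1/(n+1)}\,dy\le \frac{n}{n+1}+(1-\theta)^{-n/(n+1)}|\log\theta|$, so that $D\sim\log\frac1\theta$ and the $D^{n+1}$ in \eqref{e:tsae} produces the factor $(\log\frac1\theta)^{n+1}$ in one step; Theorem \ref{t:th2.5} applied with the weight $y^{-n}(1+|\log y|^{n+1})$ and $\alpha(\theta)=\theta/4$ then finishes. Your alternative suggestion of ``rerunning the proof for $w_0(y)=y^{-n}$'' is essentially this, and it is both shorter and cleaner than tracking $\delta$ through Theorem \ref{t:th1}; either way, you still need to actually carry out the quantitative step rather than assert it.
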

We also show that the reverse estimate in this theorem is the best possible. Further results for other weights can be obtained by methods developed here, for example some logarithmic factors can be added to the weight $w_0$ as in Theorem 2 of \cite{L:56}.\par
For weights that grow slower than $w_0$ we recover the following result of Rippon \cite{R:82}.
\begin{theoremm}[Rippon]
Let $U$ be a harmonic function in the unit ball $B\subset \mathbb{R}^{n+1}$, $U(0)=0$. Assume that 
\begin{equation}\notag
U(z)\leq w(1-|z|), \quad z\in B,
\end{equation}
where $w$ is strictly decreasing, absolutely continuous function such that
\begin{equation}\label{cond:int}
I_0=\int_0^1\left(\frac{w(t)}{t}\right)^{\frac1{n+1}}dt < +\infty.
\end{equation}
Then 
\begin{equation}\notag
|U(z)|\le C(1-|z|)^{-n}, \quad  |z|>1/2, 
\end{equation}
where $C$ depends on $n$ and $I_0$.
\end{theoremm}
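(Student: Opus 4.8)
The plan is to deduce Rippon's theorem (Theorem $2'$) from the main result, Theorem~\ref{t:th1}, by a comparison/majorization argument: although the given weight $w$ may be irregular and may even fail to grow, we can dominate $U$ from above by a \emph{larger}, regular weight $\tilde w$ to which Theorem~\ref{t:th1} (or Theorem~\ref{th:N3}) applies, and then transfer the two-sided bound. Concretely, first I would observe that the integrability hypothesis \eqref{cond:int} forces $w(t)=o(t^{-n})$ up to a summable discrepancy; more precisely, from $\int_0^1 (w(t)/t)^{1/(n+1)}\,dt<\infty$ one extracts that $t\mapsto t^n w(t)$ cannot stay large on a set of substantial logarithmic measure near $0$. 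So the natural candidate majorant is $\tilde w(y) = c\,y^{-n}$ for a constant $c$ depending on $I_0$ and $n$, \emph{plus} possibly a correction to handle the region where $w$ genuinely exceeds $c y^{-n}$.

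Next I would make this precise. Fix $z$ with $|z|>1/2$ and set $r=1-|z|$. On the subball $B(z, r/2)$ one has $1-|\zeta|\asymp r$, so $U(\zeta)\le w(1-|\zeta|)\le \sup_{r/2\le t\le 3r/2} w(t)$; but the pointwise sup of $w$ is not controlled by $I_0$, so instead I would use the classical trick underlying Rippon's original proof: apply the Harnack-type / Poisson-integral estimate for the positive harmonic function $v = w(1-|z|) - U$ on a ball, and convert the bound on $v(z)$ into a bound on $U(z)$ using that the \emph{average} of $w$, not its supremum, is what enters. The integral $I_0$ is exactly calibrated (via the exponent $1/(n+1)$, which is the codimension-style exponent appearing when one integrates the Poisson kernel's singularity along a radius) so that these averaged quantities are finite. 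This gives $|U(z)|\le C(n,I_0)(1-|z|)^{-n}$ directly, without invoking Theorem~\ref{t:th1} at all.

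Alternatively — and this is probably the cleaner route to present here since the machinery is already built — I would reduce to Theorem~\ref{th:N3}. Define $W(t) = \max(w(t), t^{-n})$ and then regularize: replace $W$ by the smallest weight $\hat w \ge W$ that is $C^2$ and satisfies \eqref{e:als}, \eqref{e:ar} with some fixed $\delta$, or simply bound $W(t) \le A\, t^{-n}$ on the (relatively large) set where this holds and absorb the exceptional set into the Poisson estimate. Then $U(z) \le \hat w(1-|z|)$, Theorem~\ref{t:th1} yields $|U(z)| \le C\hat w(1-|z|)$, and finally one shows $\hat w(1-|z|) \le C'(n,I_0)(1-|z|)^{-n}$ for $|z|>1/2$, again using \eqref{cond:int} to control how far $\hat w$ can exceed the power weight.

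The main obstacle is the possible irregularity and local unboundedness of $w$: \eqref{cond:int} is an \emph{integral} condition, so $w$ may have tall narrow spikes, and neither Theorem~\ref{t:th1} nor Theorem~\ref{th:N3} applies to such $w$ directly. The key lemma to isolate is therefore quantitative: \emph{given $w$ decreasing with $\int_0^1(w(t)/t)^{1/(n+1)}dt = I_0 < \infty$, for each $r\in(0,1/2)$ the Poisson-integral average of $\zeta \mapsto w(1-|\zeta|)$ against the harmonic measure of $B(z,1/2)$ from $z$ (where $1-|z|=r$) is at most $C(n) I_0^{\,\ast}\, r^{-n}$.} Proving this lemma is essentially Rippon's argument, and once it is in hand the theorem follows by writing $U(z) \le \big(\text{Poisson average of } w\big)$ plus a bounded harmonic correction vanishing at $0$, and bounding $|U|$ by combining with the lower bound coming from harmonicity and $U(0)=0$ in the usual way.
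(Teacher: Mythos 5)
There is a genuine gap: none of the three routes you sketch actually reaches the claimed bound $C(1-|z|)^{-n}$. The two majorization routes fail for a structural reason. Condition \eqref{cond:int} together with monotonicity gives $w(y)\le C(n)I_0^{n+1}y^{-n}$, so the only majorants available to you are at least of the order $y^{-n}$; but Theorem \ref{t:th1} requires, via \eqref{e:als} and \eqref{e:ar}, a weight growing strictly faster than $y^{-n}$, so any regular majorant $\hat w$ satisfies $\hat w(y)\gg y^{-n}$ and the conclusion $|U|\le C\hat w$ is strictly weaker than what Theorem 2$'$ asserts. If instead you majorize by $y^{-n}$ and invoke Theorem \ref{th:N3}, you only get $|U(z)|\le C(1-|z|)^{-n}\bigl(\log\frac{1}{1-|z|}\bigr)^{n+1}$, and the example in Section \ref{ss:ex} shows those logarithmic factors are unavoidable for the weight $y^{-n}$ itself. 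So the extra strength of the integral condition \eqref{cond:int} over the pointwise bound $w(y)\lesssim y^{-n}$ must be used in an essential way, and no comparison with a single pointwise majorant can capture it.

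Your ``direct'' route is the right one in spirit, but the step you defer to ``the usual way'' is precisely the hard part and the usual way loses a factor of $w$. The standard argument (bound $\int|U|$ on a sphere $|\zeta|=\rho$ by $2\int U^+\lesssim w(1-\rho)\rho^n$ using $U(0)=0$, then apply the Poisson kernel bound) yields $|U(z)|\lesssim w\bigl(c(1-|z|)\bigr)(1-|z|)^{-n}$, not $(1-|z|)^{-n}$. Also, your proposed key lemma is problematic as stated: \eqref{cond:int} does not imply $w\in L^1(0,1)$ for $n\ge1$ (take $w(t)=t^{-n}\log^{-2(n+1)}(e/t)$), so averages of $w(1-|\zeta|)$ over surfaces meeting the boundary need not even be finite, and in any case such an average controls only $U^+$, not the negative part of $U$ near the boundary. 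What is actually needed --- and what the paper supplies --- is the cusp-domain construction of Lemma \ref{l:surfacelemma}: a domain $\Omega\ni 0$ whose lateral boundary $\Gamma_a$ approaches the sphere with profile $\gamma(y)-\beta\sim(y/k(y))^{1/(n+1)}$ (this is where the exponent $\frac1{n+1}$ in \eqref{cond:int} enters), together with a positive harmonic majorant $v_a$ with $v_a(0)\lesssim\lambda^{1/(n+1)}$ dominating $k$ on $\Gamma_a$; this yields the lower bound on cap-averages of $U$ (Theorem \ref{l:lh3}). The paper then proves Theorem 2$'$ by applying Theorem \ref{l:lh3} to $\tilde u(z)=U(z(1-\theta))$ with $\tilde k(y)=w(y+\theta-y\theta)$ and $\beta=\theta/4$, and finally Theorem \ref{t:th2.5} with the \emph{dominating} weight $I_0^{n+1}y^{-n}$ and $\alpha(\theta)=\theta/4$ --- note that \eqref{e:wav.1} is checked for the power weight, not for $w$, which sidesteps the irregularity of $w$ that worried you. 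To repair your write-up you would need to either import this construction or supply an equivalent harmonic-measure lower bound for the cusp domain, which the authors explicitly say they could not find in the literature.
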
 
The last statement contains the third result of Cartwright (for $n=1$ and $w(y) = y^{-a},\; a<1$) and Theorem 1 of Linden \cite{L:56} (which corresponds to the case $n=1$, weight $w$ is regular and satisfies the integral condition above). In dimension two ($n=1$) the statement also follows from results of W. Hayman and B. Korenblum \cite{HK:76}. We suggest a unified approach that allows us to prove Theorems \ref{t:th1}, \ref{th:N3} and 2$^\prime$ more or less simultaneously.\par
The plan of the paper is as follows. In section \ref{s:not} we collect some notation and technical results. The proof of Theorems \ref{t:th1}-\ref{th:N3}$'$ consists of two mostly independent parts: first, through the use of Harnack inequality, we reduce the estimate \eqref{e:utsgc} to the two-sided inequality for some averages of the function and reformulate the regularity conditions \eqref{e:als} and \eqref{e:ar} into more convenient (with relation to our context) form, this is covered in sections  \ref{ss:th2.5} and \ref{ss:tl} respectively. The second part of the proof is given in the section \ref{ss:stth.1}, where we obtain the required estimates for the averages of the harmonic function over some spherical caps. The argument involves a construction of an auxiliary surface, this construction is a slight modification of one used in \cite{KRS}, where the Poisson representation of harmonic functions satisfying one-sided growth restriction was discussed. A similar construction appeared earlier in \cite{R:82} for the proof of Theorem 2$^\prime$. In the last section  we compare our regularity conditions with those given in \cite{Bor:03} and  construct an example demonstrating that the estimate in Theorem \ref{th:N3} is the best possible.

\section{Two-sided inequality for the averaged function}\label{s:aux}
\subsection{Notation}\label{s:not}
Given two functions $f$ and $g$ we say that $f\lesssim g$ if there is a positive constant $C$, depending only on the dimension $n$, such that $f\leq Cg$. We write $f\sim g$ if $f\lesssim g$ and $g\lesssim f$ simultaneously. A point $z$ in the unit ball $B$ will be denoted by $(x,y)$, where $x \in S= \partial B,\, x= \frac{z}{|z|}$ and $y = 1-|z|>0$. Then $y$ is the distance from $z$ to the unit sphere and $x$ is the closest to $z$ point on the sphere, this notation turns out to be convenient for our problem.   Despite the inconsistency we will sometimes write $u(z)$ and sometimes $u(x,y)$.
By $P_{y}(x,\xi)$ we denote the Poisson kernel for $B$
\begin{equation}\notag
P_{y}(x,\xi) = \frac{y(2-y)}{|(1-y)x - \xi|^{n+1}} = \frac{1-|z|^2}{|z-\xi|^{n+1}},\quad x,\xi \in S,\; y\in [0,1], \; z = (1-y)\cdot x.
\end{equation}
Let also $\phi(z,\zeta)\in [0,\pi]$ be the angle between  $z$ and $\zeta$,\; $z,\zeta\in\mathbb{R}^n\setminus\{0\}$,
\begin{equation}\notag
\phi(z,\zeta) = \cos^{-1}\left(\frac{\langle z,\zeta\rangle}{|z||\zeta|}\right).
\end{equation}
Let $\eta$ be the south pole of $B$, $\eta = (0,\dots,0,-1)$, we fix this notation for the rest of the paper.. Given $t\leq \pi$ we denote by $A(y,t)$ the \textquotedblleft antarctic\textquotedblright\, cap
\begin{equation}\notag
A(y,t) = \{z\in B: \, |z| = 1-y,\, \phi(z,\eta) \leq t\},
\end{equation}
also we put $S(y,t) = \partial A(y,t)$.
Consider then the averaged Poisson kernel,
\begin{equation}\notag
\mu(x,y,t) = \frac{1}{\sigma_{n-1}(S(0,t))}\int_{S(0,t)}P_y(x,\xi)\,d\sigma_{n-1}(\xi),\quad x\in S,\; 0<y\leq 1,\; 0\leq t\leq \pi,
\end{equation}
where $\sigma_{n-1}$ is the $(n-1)$-dimensional surface measure on $S(0,t)$, $\sigma_{n-1}(S(0,t)) = C(n)\sin^{n-1}t$. Note that $\mu(x,1,t) = 1,$ for $ x\in S,\, 0\leq t\leq \pi$.

We need the following estimate (Lemma \textbf{1} from \cite{KRS})
\begin{lemma}\label{l:pki}
For any $x\in S$ and any $y\in (0,1],\; t\leq \phi(x,\eta)$ we have
\begin{equation}\notag 
\mu(x,y,t) \sim \frac{y}{d^2(d^{n-1}+\sin^{n-1}\phi(x,\eta))},
\end{equation}
where $d^2 = 1+(1-y)^2 -2(1-y)\cos(\phi(x,\eta)-t) = \dist^2(S(0,t), S(y,\phi(x,\eta)))$.
\end{lemma}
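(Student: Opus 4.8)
The plan is to estimate $\mu(x,y,t)$ by carefully analyzing the averaged Poisson kernel in terms of the relevant geometric quantities, using the hypothesis $t \leq \phi(x,\eta)$ to control the distance from the integration sphere $S(0,t)$ to the point $z = (1-y)x$. First I would set $\phi = \phi(x,\eta)$ and parametrize points $\xi \in S(0,t)$ by their angular coordinates; since $S(0,t)$ is the sphere of colatitude $t$ around the south pole, every $\xi \in S(0,t)$ makes an angle with $z$ that is at least $\phi - t \geq 0$ and at most $\phi + t$. Writing the denominator of the Poisson kernel as $|z-\xi|^2 = 1 + (1-y)^2 - 2(1-y)\cos\phi(z,\xi)$, the smallest value of $|z-\xi|$ over $\xi \in S(0,t)$ is exactly $d$, attained when $\xi$ lies on the great circle through $\eta$ and $x$ on the same side as $x$. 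So $d = \dist(S(0,t), S(y,\phi))$ as claimed, and $\mu(x,y,t) \leq y(2-y) d^{-(n+1)} \sim y\, d^{-(n+1)}$; the work is the matching lower bound and the replacement of one factor $d^{-(n-1)}$ by $(d^{n-1} + \sin^{n-1}\phi)^{-1}$.

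The key step is to split the integration sphere $S(0,t)$ according to whether $\xi$ is close to the nearest point or far from it. I would introduce, for $\xi \in S(0,t)$, the angular distance $\psi = \psi(\xi)$ from $\xi$ to the nearest point of $S(0,t)$ to $z$; then $|z-\xi|^2 \sim d^2 + \sin^2 t\,(1 - \cos\psi') $ type expressions, but more cleanly one shows $|z-\xi|^2 \sim d^2 + \rho^2$ where $\rho$ is the Euclidean distance within $S(0,t)$ from $\xi$ to the nearest point, and $\rho$ ranges over a set of diameter $\sim \sin t$. The surface measure on $S(0,t)$ is comparable to $\rho^{n-2} d\rho$ near that nearest point (it is an $(n-1)$-dimensional sphere of radius $\sin t$), with total mass $\sim \sin^{n-1} t$. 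Hence
\begin{equation}\notag
\mu(x,y,t) \sim \frac{y}{\sin^{n-1} t}\int_0^{\sim \sin t} \frac{\rho^{n-2}\,d\rho}{(d^2+\rho^2)^{(n+1)/2}}.
\end{equation}
Evaluating this integral: if $\sin t \lesssim d$ the integral is $\sim \sin^{n-1}t \cdot d^{-(n+1)}$, giving $\mu \sim y\,d^{-(n+1)}$; if $\sin t \gtrsim d$ the integral is $\sim d^{n-2} \cdot d^{-3} = d^{-3}$ (the tail beyond $\rho \sim d$ converges), giving $\mu \sim y\,d^{-3}\sin^{-(n-1)}t$. These two regimes combine into $\mu \sim y\, d^{-2}(d^{n-1} + \sin^{n-1} t)^{-1}$.

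The remaining point is to replace $\sin t$ by $\sin\phi$ in the final answer, i.e. to show $d^{n-1} + \sin^{n-1} t \sim d^{n-1} + \sin^{n-1}\phi$. This is where the hypothesis $t \leq \phi$ is essential: clearly $\sin^{n-1}t$ could be much smaller than $\sin^{n-1}\phi$ (e.g. $t$ small, $\phi$ near $\pi/2$), so one needs that in that case $d^{n-1}$ already dominates. Indeed $d^2 = 1+(1-y)^2 - 2(1-y)\cos(\phi - t) \geq (\text{const})(y^2 + (\phi-t)^2)$, and a short trigonometric argument (considering separately $\phi \leq 2t$, where $\sin\phi \sim \sin t$, and $\phi > 2t$, where $\phi - t > \phi/2$ so $d \gtrsim \phi \gtrsim \sin\phi$) yields $d \gtrsim \sin\phi$ whenever $\sin t \ll \sin\phi$, which gives the claimed comparison. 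I expect the main obstacle to be the bookkeeping in the regime analysis of the $\rho$-integral and in making the comparison $d^{n-1}+\sin^{n-1}t \sim d^{n-1}+\sin^{n-1}\phi$ fully rigorous across all the boundary cases ($t$ or $\phi$ near $0$ or near $\pi$, $y$ near $0$ or near $1$); the geometric picture is clear but these corner cases require care. Since the statement is quoted as Lemma 1 of \cite{KRS}, I would also be content to cite that reference for the details once the geometric setup above is in place.
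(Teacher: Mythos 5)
The paper offers no proof of this lemma: it is imported verbatim as Lemma 1 of \cite{KRS} and used as a black box, so your sketch is being compared against a citation rather than an argument. That said, your direct proof is essentially correct and follows the natural route. The identification of $d$ with $\dist(z,S(0,t))$ is right (the function $\theta\mapsto 1+(1-y)^2-2(1-y)\cos\theta$ is increasing on $[0,\pi]$ and, because $t\le\phi(x,\eta)$, the angle $\phi(z,\xi)$ for $\xi\in S(0,t)$ ranges over an interval with left endpoint $\phi(x,\eta)-t$); the comparison $|z-\xi|^2\sim d^2+\rho^2$ holds with absolute constants for a point versus a round $(n-1)$-sphere; and the two-regime evaluation of the resulting integral yields $\mu\sim y\,d^{-2}(d^{n-1}+\sin^{n-1}t)^{-1}$.

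Three repairs are needed. (i) In the regime $\sin t\gtrsim d$ your intermediate value of the integral is off: $\int_0^{c\sin t}\rho^{n-2}(d^2+\rho^2)^{-(n+1)/2}\,d\rho\sim d^{-2}$ (the piece $\rho\le d$ contributes $d^{n-1}\cdot d^{-(n+1)}$ and the tail $\int_d^{c\sin t}\rho^{-3}\,d\rho$ contributes the same order), not $d^{-3}$; your stated $\mu\sim y\,d^{-3}\sin^{-(n-1)}t$ contradicts the correct combined formula you write immediately afterwards. (ii) The swap of $\sin t$ for $\sin\phi$ needs both inequalities, and you argue only one. Since $\sin$ is not monotone on $[0,\pi]$ one can have $t\le\phi$ with $\sin t\gg\sin\phi$ (take $\phi$ near $\pi$), and there you must show $d\gtrsim\sin t$: this holds because $\sin t\le\pi-t$ and $\pi-\phi\le\frac{\pi}{2}\sin\phi\ll\sin t$ give $\phi-t=(\pi-t)-(\pi-\phi)\ge\frac12\sin t$, while $d\gtrsim\min(1,\phi-t)$. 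For the direction you do treat, note that in the subcase $\phi\le 2t$ what is true and suffices is $\sin\phi\le\min(\phi,\pi-\phi)\le2\min(t,\pi-t)\le\pi\sin t$; the stronger claim $\sin\phi\sim\sin t$ fails when $\phi>\pi/2$. (iii) The density $\rho^{n-2}\,d\rho$ presumes $n\ge2$; for $n=1$ the set $S(0,t)$ is a pair of points and $\mu\sim y\,d^{-2}$ follows directly because the nearer point dominates the average. With these adjustments the argument is complete; deferring the bookkeeping to \cite{KRS}, as the paper itself does, is of course also acceptable.
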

This averaged Poisson kernel will be useful later on (section \ref{ss:stth}), when we deal with the axially symmetric harmonic functions. We call the function $\tilde{u}$ on the unit ball axially symmetric if $u(z)$ depends only on $|z|$ and the angle $\phi(z,\eta)$ between the argument $z$ and $\eta = (0,\dots,0,-1)$. If such a function has boundary values $\tilde{u}(x,0) = \varphi(t),\; t = \phi(x,\eta)$, we rewrite the Poisson representation formula in the following way
\begin{multline}\label{e:sprf}
\tilde{u}(x,y) = \int_{S}\tilde{u}(\xi,0)P_{y}(x,\xi)\,d\sigma_n(\xi) = \int_{0}^{\pi}\int_{S(0,t)}\tilde{u}(\xi,0)P_y(x,\xi)\,d\sigma_{n-1}(\xi)\,dt\\
 = C(n)\int_{0}^{\pi}\varphi(t)\mu(x,y,t)\sin^{n-1}t\,dt,
\end{multline}
where $\sigma_n$ is the normalized surface measure on $S$ and $C(n)$ is the surface measure of the $(n-1)$-dimensional unit sphere.
\subsection{Averaging theorem}\label{ss:th2.5}
In this section we show that in order to prove \eqref{e:utsgc} it is sufficient to know the estimate of the 
averages of $U$ over some spherical caps $A(\theta, \alpha)$ for some $\alpha=\alpha(\theta)$. We also make some preliminary estimates to deduce inequalities for the averages from \eqref{e:ugc} and the regularity conditions \eqref{e:als} and \eqref{e:ar}.

First we prove the following theorem
\begin{theorem}\label{t:th2.5}
Let $U$ be a harmonic function in $B$, continuous up to the boundary, satisfying
\begin{equation}\notag
\begin{split}
&U(0) = 0;\\
&U(x,y) \leq w(y),\quad x\in S,\; 0\leq y \leq 1,
\end{split}
\end{equation}
where $w$ is a strictly decreasing function.
Assume that for some positive $\theta < \frac{1}{2}$ there exists a positive $\alpha = \alpha(\theta) \leq \frac{\theta}{4}$ such that
\begin{subequations}
\begin{eqnarray}
\label{e:wav.1}&w(\theta-2\alpha) \leq C_1w(\theta);\\
\label{e:wav.2}&\frac{1}{\alpha^n}\left|\int_{A(\theta,\alpha)}U(z)\,d\sigma_n(z)\right| \leq C_2w(\theta)
\end{eqnarray}
\end{subequations}
for some positive constants $C_1, \, C_2$.
Then 
\begin{equation}\label{e:leta}
U(\eta,\theta)\geq - C_3w(\theta),
\end{equation}
where $C_3 = C_3(C_1,C_2,n)$.
\end{theorem}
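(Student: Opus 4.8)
The plan is to deduce the pointwise lower bound at $(\eta,\theta)$ from the upper bound $U\le w(y)$ together with the bound on the average over the cap $A(\theta,\alpha)$, using positivity of the Poisson kernel. First, I would pass to the function $v=w(\theta)-U$ restricted to a suitable subdomain. On the ball $B$ itself $v$ need not be positive (the hypothesis only controls $U$ near the boundary), so instead I would work on the smaller ball $B_\rho=\{|z|<1-\rho\}$ for a well-chosen radius; but the cleaner route is to note that $U(x,y)\le w(y)\le w(\theta)$ whenever $y\ge\theta$, so $v=w(\theta)-U\ge 0$ on the spherical shell $\{\theta\le y\le 1\}$, and in particular on the sphere $\{|z|=1-\theta\}$. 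Then $v$ is harmonic and nonnegative on the ball $\{|z|<1-\theta\}$... except it is \emph{not} harmonic there since we subtracted a constant — fine, $v$ \emph{is} harmonic, being $U$ minus a constant. So $v$ is a nonnegative harmonic function on $\{|z|<1-\theta\}$, vanishing nowhere forced, and I can apply the Harnack inequality on this ball to compare $v$ at the center-ish point $(\eta,\theta)$ (which lies on the boundary sphere $\{|z|=1-\theta\}$, so that is the wrong point) — the point $(\eta,\theta)$ has $|z|=1-\theta$, i.e. it is on the boundary of this ball.

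Let me restructure. The point $z_0=(\eta,\theta)$ satisfies $|z_0|=1-\theta$. Consider the slightly larger ball $B'=\{|z|<1-\theta/2\}$; on $B'$ the estimate $U\le w(y)$ only gives $U\le w(\theta/2)$, which by \eqref{e:wav.1} is $\le C_1' w(\theta)$ — actually \eqref{e:wav.1} controls $w(\theta-2\alpha)$, and since $\alpha\le\theta/4$ we get $\theta-2\alpha\ge\theta/2$, so $w(\theta/2)\le w(\theta-2\alpha)\le C_1 w(\theta)$. Hence $v:=C_1w(\theta)-U\ge 0$ on $B'$ and is harmonic there. Now $v$ is a nonnegative harmonic function on $B'$, the point $z_0$ sits at distance $\theta/2$ from $\partial B'$ while the cap $A(\theta,\alpha)$ sits at distance comparable to $\theta$ from $\partial B'$ as well (since $\alpha\le\theta/4$, points of $A(\theta,\alpha)$ have $|z|=1-\theta$), and both $z_0$ and $A(\theta,\alpha)$ lie in the ball $\{|z|\le 1-\theta\}\subset B'$ well inside $B'$. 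By the Harnack inequality (comparing values of the nonnegative harmonic function $v$ at interior points of $B'$ that are uniformly — in terms of $\theta$ — away from $\partial B'$ and within bounded hyperbolic distance), there is a constant $c=c(n)$ with
\begin{equation}\notag
v(z_0)\le \frac{c}{\sigma_n(A(\theta,\alpha))}\int_{A(\theta,\alpha)}v\,d\sigma_n.
\end{equation}
Here I must be slightly careful: Harnack's inequality compares $v(z_0)$ with $v$ at a single other point; to compare with an \emph{average} I either apply it pointwise and integrate, or use the sub-mean-value/Harnack chain argument. Either way $\sigma_n(A(\theta,\alpha))\sim\alpha^n$ because $A(\theta,\alpha)$ is a cap of angular radius $\alpha$ on a sphere of radius $1-\theta\sim 1$.

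Plugging in $v=C_1w(\theta)-U$:
\begin{equation}\notag
C_1w(\theta)-U(z_0)\le \frac{c}{\alpha^n}\int_{A(\theta,\alpha)}\bigl(C_1w(\theta)-U\bigr)\,d\sigma_n
\le c\,C_1 w(\theta) + \frac{c}{\alpha^n}\left|\int_{A(\theta,\alpha)}U\,d\sigma_n\right|,
\end{equation}
and by \eqref{e:wav.2} the last term is $\le c\,C_2 w(\theta)$. Rearranging gives $U(z_0)\ge -(cC_1+cC_2-C_1)w(\theta)\ge -C_3w(\theta)$ with $C_3=C_3(C_1,C_2,n)$, which is exactly \eqref{e:leta}. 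The main obstacle is the geometric bookkeeping: verifying that $z_0=(\eta,\theta)$ and the whole cap $A(\theta,\alpha)$ lie in a common ball $B'$ on which $U\le C_1w(\theta)$, that they are all at distance $\gtrsim\theta\cdot(\text{radius of }B')$ from $\partial B'$, and that their mutual hyperbolic distance in $B'$ is bounded by an absolute constant — this is where $\alpha\le\theta/4$ and $\theta<1/2$ are used, and it is what makes the Harnack constant depend only on $n$ rather than on $\theta$. The continuity of $U$ up to the boundary is only needed to make the average in \eqref{e:wav.2} and the evaluation $U(\eta,\theta)$ literally meaningful.
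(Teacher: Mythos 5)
Your overall strategy is the same as the paper's (use \eqref{e:wav.1} to make $C_1w(\theta)-U$ nonnegative on a neighbourhood of $(\eta,\theta)$, apply Harnack to compare its value at $(\eta,\theta)$ with its average over the cap, then invoke \eqref{e:wav.2}), but there is a concrete error in the step that sets up the positivity, and it is not cosmetic. You claim that on $B'=\{|z|<1-\theta/2\}$ one has $U\le w(\theta/2)\le w(\theta-2\alpha)\le C_1w(\theta)$, justifying the middle inequality by $\theta-2\alpha\ge\theta/2$. But $w$ is \emph{decreasing}, so $\theta-2\alpha\ge\theta/2$ gives $w(\theta-2\alpha)\le w(\theta/2)$ --- the opposite of what you wrote. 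Hypothesis \eqref{e:wav.1} controls $w$ only at the point $\theta-2\alpha$, which is very close to $\theta$ when $\alpha\ll\theta$, and says nothing about $w(\theta/2)$. For the rapidly growing weights this theorem is designed for (e.g.\ $w(y)=e^{1/y}$, where $\alpha(\theta)\sim\theta^2$), the ratio $w(\theta/2)/w(\theta)$ is unbounded, so $C_1w(\theta)-U$ need not be nonnegative anywhere near $\partial B'$, and the Harnack step collapses.

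The repair is forced on you by \eqref{e:wav.1}: you must work on the ball $b$ of radius $2\alpha$ (not $\theta/2$) centred at $(\eta,\theta)$. On $b$ every point satisfies $1-|z|\ge\theta-2\alpha>0$, hence $U(z)\le w(1-|z|)\le w(\theta-2\alpha)\le C_1w(\theta)$, so $v=C_1w(\theta)-U\ge0$ there; and the cap $A(\theta,\alpha)$ still lies within Euclidean distance $\alpha$, i.e.\ half the radius of $b$, from its centre, so Harnack applies with a constant depending only on $n$. With that substitution the rest of your computation (averaging $v$ over the cap, $\sigma_n(A(\theta,\alpha))\sim\alpha^n$, inserting \eqref{e:wav.2}, and rearranging to get \eqref{e:leta}) goes through verbatim and coincides with the paper's proof. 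Note also that your intermediate attempts with the balls $\{|z|<1-\theta\}$ and $B'$ illustrate the same point: the only region where the hypotheses give a uniform upper bound $C_1w(\theta)$ on $U$ is the $2\alpha$-neighbourhood of the cap, which is exactly why the scale $\alpha$, and not $\theta$, governs the whole argument.
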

\begin{proof}
Consider the ball $b$ with center $(\eta,\theta)$ and radius $2\alpha$. The condition \eqref{e:wav.1} implies that for any $z\in B$ we have
\begin{equation}\notag
w(1-|z|) \leq C_1w(\theta),
\end{equation}
and therefore 
\begin{equation}\notag
-U(z) + C_1w(\theta) \geq 0, \quad z \in b.
\end{equation}
Now we can use the Harnack inequality to obtain
\begin{equation}\notag
-U(\eta,\theta) + C_1w(\theta) \leq C(n)\left(-U(z) + C_1w(\theta)\right),\quad |z| = 1-\theta,\;\phi(z,\eta) \leq \alpha.
\end{equation}
All that remains is to take average over $\{z:|z|=1-\theta,\; \phi(z,\eta) \leq\alpha\}$, so we get
\begin{equation}\notag
-U(\eta,\theta) + C_1w(\theta) \leq \tilde{C}(n)\frac{1}{\alpha^n}\int_{\{z:|z|=1-\theta,\; \phi(z,\eta) \leq\alpha\}}\left(-U(z) + C_1w(\theta)\right)\,d\sigma_n(z),
\end{equation}
which, combined with \eqref{e:wav.2}, implies \eqref{e:leta}.
\end{proof}
\subsection{Two lemmas}\label{ss:tl}
Now we want to show that the regularity conditions \eqref{e:als} and \eqref{e:ar} imply \eqref{e:wav.1} and \eqref{e:wav.2} for an appropriately chosen $\alpha = \alpha(\theta)$. It turns out that a natural way to define $\alpha$ (at least for somewhat smooth weights) is this one 
\begin{equation}\label{e:adf}
\alpha(\theta) := -\frac{w(\theta)}{10w'(\theta)},\quad 0<\theta<1,
\end{equation}
we refer the reader to Section \ref{ss:sc} for a further discussion. The validity of our choice is provided by the following lemma.
\begin{lemma}\label{l:lh1}
If the weight $w$ satisfies \eqref{e:ar} then $\alpha(\theta) \leq \frac{\theta}{4}$ and
\begin{equation}\notag
w(\theta-2\alpha(\theta)) \leq 2w(\theta),\quad 0< \theta \leq \frac{1}{2}.
\end{equation}
\end{lemma}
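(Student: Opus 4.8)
The plan is to extract both conclusions from condition \eqref{e:ar} by rewriting it in terms of the auxiliary function $\psi(y) := -w(y)/w'(y)$, which is exactly $10\alpha(y)$. Since $w$ is strictly decreasing, $\psi > 0$ on $(0,1)$, and \eqref{e:ar} says precisely that $\psi'(y) \ge -(1-\delta)/n$ for $0 < y < 1$. I would first establish the bound $\alpha(\theta) \le \theta/4$. Integrating the inequality $\psi'(y) \ge -(1-\delta)/n \ge -1/n$ from $\theta$ down to $0$ (or rather from $0$ to $\theta$), together with the fact that $\psi(y) \to 0^+$ as $y \to 0$ — which is condition \eqref{e:als} — gives $\psi(\theta) \le \psi(0^+) + \theta/n \le \theta/n$. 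Hence $\alpha(\theta) = \psi(\theta)/10 \le \theta/(10n) \le \theta/4$ for all $n \ge 1$. (In fact this already shows $2\alpha(\theta) \le \theta/5 < \theta$, so $\theta - 2\alpha(\theta)$ lies in $(0,1)$ and $w$ is defined there.)

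For the second conclusion, $w(\theta - 2\alpha(\theta)) \le 2w(\theta)$, I would compare $\log w$ at the two points by integrating its derivative. We have $(\log w)'(y) = w'(y)/w(y) = -1/\psi(y)$, so
\begin{equation}\notag
\log w(\theta - 2\alpha(\theta)) - \log w(\theta) = \int_{\theta}^{\theta - 2\alpha(\theta)} (\log w)'(y)\,dy = \int_{\theta - 2\alpha(\theta)}^{\theta} \frac{dy}{\psi(y)}.
\end{equation}
On the interval $[\theta - 2\alpha(\theta), \theta]$, the near-linearity of $\psi$ gives a lower bound: for $y$ in this interval, $\psi(y) \ge \psi(\theta) - \frac{1-\delta}{n}(\theta - y) \ge \psi(\theta) - \frac{1-\delta}{n}\cdot 2\alpha(\theta)$. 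Since $2\alpha(\theta) = \psi(\theta)/5$ and $(1-\delta)/n \le 1$, this is at least $\psi(\theta)(1 - 2/5\cdot 1) = \psi(\theta)\cdot \tfrac{3}{5} \ge \psi(\theta)/2$. Therefore the integral is bounded by $\frac{2\alpha(\theta)}{\psi(\theta)/2} = \frac{2 \cdot \psi(\theta)/5}{\psi(\theta)/2} = \frac{4}{5} < \log 2$, which yields $w(\theta - 2\alpha(\theta)) \le 2 w(\theta)$ as claimed.

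The only delicate point is justifying $\psi(0^+) = 0$ rigorously from \eqref{e:als} — since $w$ is only assumed $C^2$ and $w'$ could behave wildly, one should note that \eqref{e:als} is literally the statement $\lim_{y \to 0} w(y)/w'(y) = 0$, and since $w$ decreasing forces $w'(y)/w(y) < 0$, we get $\psi(y) = -w(y)/w'(y) > 0$ with $\psi(y) \to 0$; so no extra work is needed, the hypothesis is tailored to this. A secondary check is that $\psi$ stays positive on all of $(0, \theta]$ so that the integrals above make sense and $\log w$ is finite — this follows from $w > 0$ and $w$ strictly decreasing (so $w' < 0$ wherever it is nonzero; and $w' = 0$ at an interior point would contradict strict monotonicity combined with $w \in C^2$ only mildly, but in any case one may work on the open set where $w' < 0$ and pass to the limit). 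I expect no real obstacle here; the lemma is essentially a calibration of the constant $10$ in the definition \eqref{e:adf} against the constants $2$ and $1/4$ in the statement, and the computation above shows $10$ is comfortably large enough.
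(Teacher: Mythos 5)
Your overall strategy is sound and differs mildly from the paper's: the paper applies the Lagrange mean value theorem to $w$ on $[\theta-2\alpha(\theta),\theta]$ and controls $w'(\theta_1)$ by comparing $\alpha(\theta_1)$ with $\alpha(\theta)$, whereas you integrate the logarithmic derivative $(\log w)'=-1/\psi$ and bound $\psi$ from below on that interval. Both arguments rest on the same mechanism (the Lipschitz bound on $\psi=10\alpha$ coming from \eqref{e:ar}), and your packaging is arguably cleaner; your derivation of $\alpha(\theta)\le\theta/4$ from $\psi(0^+)=0$ together with the Lipschitz bound is also the right one (note it uses \eqref{e:als}, which is a standing assumption on $w$, not literally a consequence of \eqref{e:ar} alone).

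However, two steps are wrong as written. First, the translation of \eqref{e:ar}: since $w/w'=-\psi$, the condition $(w/w')'\ge-\frac{1-\delta}{n}$ reads $\psi'\le\frac{1-\delta}{n}$, not $\psi'\ge-\frac{1-\delta}{n}$. The inequalities you actually use later, namely $\psi(\theta)\le\psi(0^+)+\theta/n$ and $\psi(y)\ge\psi(\theta)-\frac{1-\delta}{n}(\theta-y)$ for $y<\theta$, follow only from the correct version $\psi'\le\frac{1-\delta}{n}$; from the version you state they would come out with the opposite, useless, orientation. Second, the closing numerics fail as stated: $4/5>\log 2\approx 0.693$, so the claim that the integral is at most $4/5<\log 2$ does not give $w(\theta-2\alpha(\theta))\le 2w(\theta)$. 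The $4/5$ is itself an arithmetic slip: you correctly record $2\alpha(\theta)=\psi(\theta)/5$ but then treat the interval length as $2\cdot\psi(\theta)/5$. Done correctly, $\psi(y)\ge\psi(\theta)-\frac{1-\delta}{n}\cdot 2\alpha(\theta)\ge\frac45\psi(\theta)$ on the interval, so the integral is at most $\frac{\psi(\theta)/5}{(4/5)\psi(\theta)}=\frac14<\log 2$, giving $w(\theta-2\alpha(\theta))\le e^{1/4}w(\theta)<2w(\theta)$. So the lemma does follow by your route once the sign of the hypothesis and the arithmetic are repaired.
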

Now we need to see if the $\alpha$ we have chosen in \eqref{e:adf} fits into \eqref{e:wav.2}. This is a much more complicated task than verifying \eqref{e:wav.1}, and the first step is the statement below.
\begin{lemma}\label{l:lh2}
If the weight $w$ satisfies \eqref{e:als} and \eqref{e:ar} for some $\delta>0$ and $\alpha(\theta)$ is defined by \eqref{e:adf} then for $0<\theta\leq \frac{1}{2}$
\begin{equation}\label{e:wri}
\int_{0}^1\left(\frac{w(y(1-\theta)+\theta)}{y}\right)^{\frac{1}{n+1}}\,dy \leq \left(\frac{n+1}{n} + \frac{40(n+1)}{\delta}\right)w^{\frac{1}{n+1}}(\theta)\alpha^{\frac{n}{n+1}}(\theta).
\end{equation}
\end{lemma}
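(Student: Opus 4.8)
The plan is to turn the regularity hypotheses into a pointwise decay bound for $w$ and then recognise the resulting integral as a Beta function. Set $\psi(y):=-w(y)/w'(y)=10\,\alpha(y)$; then \eqref{e:als} reads $\psi(0^+)=0$ and \eqref{e:ar} reads $\psi'(y)\le (1-\delta)/n=:m$ on $(0,1)$ (the latter forces $w'<0$, so $\psi>0$ is well defined). First I would record the elementary estimate
\[
w(s)\le w(\theta)\Bigl(1+\tfrac{(1-\delta)(s-\theta)}{n\,\psi(\theta)}\Bigr)^{-n/(1-\delta)},\qquad \theta\le s\le 1,
\]
obtained by integrating $(\log w)'(r)=-1/\psi(r)$ from $\theta$ to $s$ and using $\psi(r)\le\psi(\theta)+m(r-\theta)$, which holds for $r\ge\theta$ since $\psi'\le m$.

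Next I would substitute $s=y(1-\theta)+\theta$, so that $s-\theta=y(1-\theta)\ge y/2$ for $\theta\le\tfrac12$; since $x\mapsto(1+x)^{-n/(1-\delta)}$ is decreasing, the displayed bound yields
\[
\int_0^1\Bigl(\tfrac{w(y(1-\theta)+\theta)}{y}\Bigr)^{\frac1{n+1}}dy\le w(\theta)^{\frac1{n+1}}\int_0^\infty y^{-\frac1{n+1}}\Bigl(1+\tfrac{(1-\delta)y}{2n\,\psi(\theta)}\Bigr)^{-\frac{n}{(1-\delta)(n+1)}}dy ,
\]
where I have extended the integration to $(0,\infty)$ (the integrand is nonnegative). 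The improper integral converges \emph{precisely because} $\delta>0$: the exponent at infinity equals $\tfrac1{n+1}+\tfrac{n}{(1-\delta)(n+1)}>1$. The scaling $u=\tfrac{(1-\delta)y}{2n\psi(\theta)}$ then turns it into $\bigl(\tfrac{2n\psi(\theta)}{1-\delta}\bigr)^{\frac{n}{n+1}}B\bigl(\tfrac{n}{n+1},\tfrac{n\delta}{(1-\delta)(n+1)}\bigr)$, where $B$ is the Euler Beta function.

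It then remains to bound this expression by $\tfrac{40(n+1)}{\delta}\,w(\theta)^{1/(n+1)}\alpha(\theta)^{n/(n+1)}$. I would use $B(a,b)=\Gamma(a)\Gamma(b)/\Gamma(a+b)\le\Gamma(a)(b+1)^{1-a}/b$ (log-convexity of $\Gamma$, i.e.\ Gautschi's inequality) with $a=\tfrac{n}{n+1}$, $b=\tfrac{n\delta}{(1-\delta)(n+1)}$, and note that $b+1\le(1-\delta)^{-1}$ while $b^{-1}=(1-\delta)(n+1)/(n\delta)$; the factors $(1-\delta)^{-n/(n+1)}$, $(b+1)^{1/(n+1)}$ and $b^{-1}$ then combine so that \emph{all} powers of $1-\delta$ cancel, and together with $\psi(\theta)=10\alpha(\theta)$ and $\Gamma\bigl(\tfrac{n}{n+1}\bigr)\le\sqrt{\pi}<2$ one arrives at the claim (in fact with $\tfrac{n+1}{n}$ to spare). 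The hard part is exactly this last step: one must keep track of the decay of the integrand all the way down — a cruder estimate such as $(1+x)^{-n/(1-\delta)}\le 1$ loses it — and it is the cancellation of the powers of $1-\delta$ that makes the constant of order $C(n)/\delta$ and, in particular, keeps it bounded as $\delta\to1$.
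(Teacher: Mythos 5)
Your proof is correct, and it takes a genuinely different route from the paper's. The paper splits the integral at $y=\alpha(\theta)$, handles $[0,\alpha(\theta)]$ by the trivial bound $w(\cdot)\le w(\theta)$, and handles $[\alpha(\theta),1]$ by establishing and integrating the differential inequality $\bigl(\alpha^{\frac{n}{n+1}}\kappa\bigr)'\le-\frac{\delta}{10(n+1)}\alpha^{-\frac{1}{n+1}}\kappa$ for $\kappa=w^{\frac1{n+1}}$, followed by the comparison $\alpha(\theta+(1-\theta)y)\le 2y$; it never produces a pointwise majorant for $w$ and uses no special functions. You instead integrate $(\log w)'=-1/\psi$ against the linear majorant $\psi(r)\le\psi(\theta)+\frac{1-\delta}{n}(r-\theta)$ (valid since $\psi'\le\frac{1-\delta}{n}$ is exactly \eqref{e:ar}) to get the explicit power-law bound $w(s)\le w(\theta)\bigl(1+\frac{(1-\delta)(s-\theta)}{n\psi(\theta)}\bigr)^{-n/(1-\delta)}$, and then evaluate the integral exactly as a Beta function. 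I checked the bookkeeping: the exponent at infinity is $\frac1{n+1}+\frac{n}{(1-\delta)(n+1)}>1$ precisely when $\delta>0$; the Gautschi estimate $B(a,b)\le\Gamma(a)(b+1)^{1-a}/b$ with $a=\frac{n}{n+1}$, $b=\frac{n\delta}{(1-\delta)(n+1)}$ is valid; $b+1=\frac{n+1-\delta}{(1-\delta)(n+1)}\le(1-\delta)^{-1}$; and the powers of $1-\delta$ do cancel exactly ($-\frac{n}{n+1}-\frac1{n+1}+1=0$), yielding $(20n)^{\frac{n}{n+1}}\Gamma(\frac{n}{n+1})\frac{n+1}{n\delta}\le\frac{40(n+1)}{\delta}$, which is even slightly sharper than the stated constant. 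Two small remarks: you never actually need \eqref{e:als} here (neither does the paper's proof of this lemma), and the potential degeneracy at $\delta=1$ (where your exponent $-n/(1-\delta)$ blows up) cannot occur, since \eqref{e:als} together with $(w/w')'\ge0$ would force the negative quantity $w/w'$ to be nonnegative; so $\delta<1$ is automatic and your Beta-function computation is always legitimate. What each approach buys: yours gives a reusable explicit decay estimate for regular weights and a cleaner constant; the paper's differential-inequality form is more elementary (no Gamma-function estimates) and is recycled verbatim in section \ref{ss:sc} for weights of polynomial growth satisfying only \eqref{e:wpg}.
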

\subsection{Proof of Lemma \ref{l:lh1}}\label{ss:lh1}
For any $0<\theta<\frac12$ there exists  $\theta_1\in [\theta-2\alpha(\theta),\theta]$ such that
\begin{equation}\label{e:tht1}
w(\theta) = w(\theta-2\alpha(\theta)) + 2\alpha(\theta)w'(\theta_1).
\end{equation}
The regularity condition \eqref{e:ar} implies that 
\begin{equation}\label{e:smdtmp}
\alpha'(\theta) \leq \frac{1-\delta}{10n}.
\end{equation}
Hence $\alpha(\theta_1) \geq \alpha(\theta) - \frac{\theta-\theta_1}{10}$,
and, on the other hand, $\theta-\theta_1 \leq 2\alpha(\theta)$, therefore 
\begin{equation}\notag
\alpha(\theta_1) \geq \alpha(\theta) - \frac{\alpha(\theta)}{5} = \frac{4}{5}\alpha(\theta).
\end{equation}
We see that
\begin{equation}\notag
-\alpha(\theta)w'(\theta_1) = \frac{\alpha(\theta)w(\theta_1)}{10\alpha(\theta_1)} \leq \frac{w(\theta_1)}{8}.
\end{equation}
Plugging this inequality into \eqref{e:tht1}, we obtain
\begin{equation}\notag
w(\theta) \geq w(\theta-2\alpha(\theta)) - \frac{w(\theta_1)}{4} \geq \frac{w(\theta-2\alpha(\theta))}{2}
\end{equation}
and the lemma follows.

\subsection{Proof of Lemma \ref{l:lh2}}\label{ss:lh2}
We split the integral in \eqref{e:wri} into two parts
\begin{multline*}
\int_{0}^1\left(\frac{w(y+\theta-y\theta)}{y}\right)^{\frac{1}{n+1}}\,dy =\\ \int_{0}^{\alpha(\theta)}\left(\frac{w(y+\theta-y\theta)}{y}\right)^{\frac{1}{n+1}}\,dy + \int_{\alpha(\theta)}^1\left(\frac{w(y+\theta-y\theta)}{y}\right)^{\frac{1}{n+1}}\,dy.
\end{multline*}
To estimate the first integral we just note that  for $\theta \leq 1$
\begin{equation}\notag
\int_{0}^{\alpha}\left(\frac{w(y+\theta-y\theta)}{y}\right)^{\frac{1}{n+1}}\,dy \leq w^{\frac{1}{n+1}}(\theta)\int_{0}^{\alpha}\left(\frac{1}{y}\right)^{\frac{1}{n+1}}\,dy \leq \frac{n+1}{n}w^{\frac{1}{n+1}}(\theta)\alpha^{\frac{n}{n+1}}(\theta).
\end{equation}
To deal with the second one we let
$\kappa(y) := w^{\frac{1}{n+1}}(y),\ y>0,$
so we need to verify
that
\begin{equation}\label{e:kpi}
\int_\alpha^1\kappa((1-\theta)y+\theta)y^{-\frac{1}{n+1}}\,dy \leq \tilde{C}\kappa(\theta)\alpha^{\frac{n}{n+1}}(\theta).
\end{equation}
It follows from the definition of $\alpha$ and $\kappa$ that
\begin{equation}\notag
\alpha\cdot\frac{\kappa'}{\kappa} = -\frac{1}{10(n+1)},
\end{equation}
and, using \eqref{e:ar}, we obtain
\begin{multline}\label{e:akpd}
\left(\alpha^{\frac{n}{n+1}}\kappa\right)' = \alpha^{-\frac{1}{n+1}}\kappa\left(\frac{\kappa'}{\kappa}\alpha + \frac{n}{n+1}\alpha'\right) =\\\alpha^{-\frac{1}{n+1}}\kappa\left(-\frac{1}{10(n+1)} + \frac{n}{n+1}\alpha'\right)\leq
 -\frac{1}{10}\alpha^{-\frac{1}{n+1}}\kappa\left(\frac{1}{(n+1)} - \frac{1-\delta}{n+1}\right) =\\ -\frac{\delta}{10(n+1)}\alpha^{-\frac{1}{n+1}}\kappa. 
\end{multline}
Let $c= \frac{\delta}{10(n+1)}$, then
integrating \eqref{e:akpd} over $[\theta,1]$ for $\theta<\frac12$ we see that
\begin{multline*}
\kappa(\theta)\alpha^{\frac{n}{n+1}}(\theta) \geq c\int_{\theta}^{1}\kappa(y)\alpha^{-\frac{1}{n+1}}(y)\,dy  =\\
c(1-\theta)\int_0^1\kappa\left(y(1-\theta)+\theta\right)\alpha^{-\frac{1}{n+1}}\left(y(1-\theta)+\theta\right)\,dy 
\end{multline*}
Now, for $y\geq\alpha(\theta)$ by \eqref{e:smdtmp}
we have $\alpha(\theta + (1-\theta)y) \leq \alpha(\theta)+(1-\theta)y \leq 2y$. Therefore
\begin{equation}\notag
\int_0^1\kappa\left(y(1-\theta)+\theta\right)\alpha^{-\frac{1}{n+1}}\left(y(1-\theta)+\theta\right)\,dy  \geq 2^{-\frac{1}{n+1}}\int_{\alpha(\theta)}^1\kappa\left((1-\theta)y+\theta\right)y^{-\frac{1}{n+1}}\,dy. 
\end{equation}
This gives  \eqref{e:kpi} with $\tilde{C} = \frac{2^{\frac{1}{n+1}}10(n+1)}{\delta(1-\theta)}$.
Summing up the estimates for both integrals, we get
\begin{equation}\notag
\int_{0}^1\left(\frac{w(y+\theta-y\theta)}{y}\right)^{\frac{1}{n+1}}\,dy \leq \left(\frac{n+1}{n} + \frac{40(n+1)}{\delta}\right)w^{\frac{1}{n+1}}(\theta)\alpha^{\frac{n}{n+1}}(\theta),
\end{equation}
and we are done.
\subsection{Some comments}\label{ss:sc}
We have seen that in order to prove the main theorem we need the conditions \eqref{e:wav.1} and \eqref{e:wav.2}. They are rather independent: the proof of the first one is self-contained, and the second one, as it will be shown later, follows from Lemma \ref{l:lh2}, where we do not use any information on the doubling property of $\alpha$.
Combining them, we see  that for every fixed $\theta$ we essentially need to find some $\alpha = \alpha(\theta)$ such that
\begin{subequations}\label{e:rar}
\begin{eqnarray}
\label{e:rar.1}&w(\theta-\alpha) \leq 2w(\theta);\\
\label{e:rar.2}&\int_{\alpha}^1\left(\frac{w(y+\theta-y\theta)}{y}\right)^{\frac{1}{n+1}}\,dy \leq C(w,n)w^{\frac{1}{n+1}}(\theta)\alpha^{\frac{n}{n+1}}(\theta).
\end{eqnarray}
\end{subequations}
These two inequalities are actually \textquotedblleft fighting\textquotedblright$\,$ with each other. Indeed, if we put $\alpha$ to be very small, then the first condition is immediately satisfied, but the second one fails miserably. On the other hand $\alpha$ can not be large (compared to $\theta$), because of the first condition: the faster the weight $w$ grows the smaller must $\alpha$ be. If we try to unify these two inequalities we (albeit probably with some loss of information) will arrive to the \textquotedblleft regularity\textquotedblright\, of the weight $w$ as stated in \eqref{e:als} and \eqref{e:ar}.

It should be noted that this is the only place we needed the regularity conditions, so if we have a weight $w$ that satisfies \eqref{e:rar.1} and \eqref{e:rar.2} with some $\alpha$ (not necessarily defined like in \eqref{e:adf}), then Theorem \ref{t:th1} still holds. One important example (which will be discussed again in section \ref{s:bc}) is the weight $w$ of polynomial growth. Assume that $w\in C^1$ and 
\begin{equation}\label{e:wpg}
-\frac{N}{y} \leq \frac{w'(y)}{w(y)} \leq -\frac{n+\varepsilon}{y},\; y\in (0,1]
\end{equation}
for some positive $\varepsilon$ and $N\geq n+\varepsilon$. Put $\alpha(\theta) = \frac{\theta}{2N}$. Clearly $w(\theta - \alpha) \leq 2w(\theta),\; \theta\in (0,1]$, so that we have \eqref{e:rar.1}. Furthermore 
\begin{equation}\notag
\left(y^{\frac{n}{n+1}}w^{\frac{1}{n+1}}(y)\right)' = \frac{y^{-\frac{1}{n+1}}w^{\frac{1}{n+1}}(y)}{n+1}\left(\frac{w'(y)}{w(y)}y+n\right)
\end{equation}
so that
\begin{equation}\notag
\left(\alpha^{\frac{n}{n+1}}(y)w^{\frac{1}{n+1}}(y)\right)' \leq -C\alpha^{-\frac{1}{n+1}}(y)w^{\frac{1}{n+1}}(y),
\end{equation}
which is basically \eqref{e:akpd}. Following by letter the proof of the Lemma \ref{l:lh2} we see that \eqref{e:rar.2} also holds. Note that in this case the weight $w$ can be a little less smooth than required by the regularity condition \eqref{e:ar}.

Note also that in order to bound $U$ from below we do not need $w$ to be regular on the entire interval $(0,1]$. Assume that $w\in C^2$, decreasing, and \eqref{e:als} and \eqref{e:ar} hold only for $0< y \leq y_0$ (or $w\in C^1$ and \eqref{e:wpg} holds only for $0< y \leq y_0$) for some $y_0<1$. We still can prove a version of Theorem \ref{t:th1} replacing \eqref{e:utsgc} with
\begin{equation}\label{e:wnr}
|U(z)| \leq C_1 + C_2w(1-|z|), \quad z\in B.
\end{equation}
Indeed, it is easy to show that there exists a $C^{2}$ function $\tilde{w}$ that satisfies \eqref{e:als} and \eqref{e:ar} for $y\in (0,1]$ and such that
\begin{equation}\notag
\begin{split}
&\tilde{w}(y) \geq w(y),\quad y_1\leq y\leq 1;\\
&\tilde{w}(y) = Aw(y),\quad 0< y\leq y_1. 
\end{split}
\end{equation}
For example one may choose $\tilde{w}(y) = c(y+b)^{s}$ for $y\geq y_1$ and some $y_1\leq y_0$ such that $\left(\frac{w}{w'}\right)'(y_1) < 0$. Since Theorem \ref{t:th1} holds for $\tilde{w}$, we immediately have \eqref{e:wnr}. Similar argument works for $w\in C^1$ satisfying \eqref{e:wpg}.
\subsection{A question on harmonic measure estimates in cusp-like domains.}
One of the possible ways to simplify the proofs of Theorems \ref{t:th1}, \ref{th:N3} and \ref{th:N3}$^\prime$ is to obtain lower estimates for the asymptotic of the harmonic measure of regular axially symmetric cusp-like domains. The form of the domain  depends on $w$.  We refer the reader to \cite{R:07} where upper (not lower) estimates for harmonic measure in a cusp-like domains are used in connection  to the Levinson "loglog" theorem. Unfortunately, we do not know any reference for the lower estimates of harmonic measure in cusp-like domains in higher dimensions. We were  compelled to use the ideas developed in \cite{KRS} as a workaround to avoid the harmonic measure estimates.

\section{Main technical theorem}\label{ss:stth.1}
\subsection{Statement}\label{ss:stth}
The next theorem allows us to estimate the absolute values of some averages of the harmonic function bounded from above.
\begin{theorem}\label{l:lh3}
Let $\tilde{k}:\mathbb{R}_+\mapsto\mathbb{R}_+$ be a strictly decreasing absolutely continuous function such that
\begin{subequations}\label{e:kav}
\begin{eqnarray}
\label{e:kav.1}&\tilde{k}(0) < \infty;\\
\label{e:kav.2}&\int_{0}^{1}\left(\frac{\tilde{k}(y)}{y}\right)^{\frac{1}{n+1}}\,dy \leq D 
\end{eqnarray}
\end{subequations}
for some constant $1< D < \infty$.
Let $\tilde{u}$ be a harmonic function in $B$, continuous up to the boundary, satisfying
$\tilde{u}(0) = 0$ and
$\tilde{u}(x,y) \leq \tilde{k}(y)$ for $x\in S,\; 0\leq y \leq 1.$
Then for any $x_0\in S$ and $\beta\in[0,\frac{1}{2}]$ the following inequality holds
\begin{equation}\label{e:tsae}
\int_{\{\phi(x,x_0)\leq \beta\}}\tilde{u}(x,0)\,d\sigma_n(x)  \geq - C\left(D ^{n+1}+\tilde{k}(0)\beta^{n}\right). 
\end{equation}
where the constant $C$ depends only on the dimension $n$.
\end{theorem}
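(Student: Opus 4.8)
The plan is to reduce to an axially symmetric $\tilde u$ and then run the auxiliary-surface argument of \cite{KRS} (see also \cite{R:82}), organized over dyadic scales; the integral hypothesis \eqref{e:kav.2} enters precisely when one sums the scale-by-scale bounds. First I would symmetrize. Both the cap $\{\phi(x,x_0)\le\beta\}$ and the majorant $y\mapsto\tilde k(y)$ are invariant under the rotations of $B$ fixing the axis through $x_0$, so replacing $\tilde u$ by its mean over this subgroup of rotations produces a harmonic function that still vanishes at $0$, still satisfies $\tilde u(x,y)\le\tilde k(y)$, is axially symmetric, and has unchanged integral over the cap; rotating, we may take $x_0=\eta$. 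Writing $\varphi(t)=\tilde u(\xi,0)$ for $\phi(\xi,\eta)=t$, we have $\varphi\le\tilde k(0)$ everywhere and $\int_0^\pi\varphi(t)\sin^{n-1}t\,dt=0$ (the mean value at the centre), and, fibering $\sigma_n$ over latitude circles, the left side of \eqref{e:tsae} equals $c_n\int_0^\beta\varphi(t)\sin^{n-1}t\,dt$ with $c_n$ dimensional. Since $\int_0^\beta\varphi\sin^{n-1}\ge-\int_0^\beta\varphi_-\sin^{n-1}$, where $\varphi_-=\max(-\varphi,0)$, the whole statement reduces to the one-sided estimate
\[
\int_0^\beta\varphi_-(t)\sin^{n-1}t\,dt\ \lesssim\ D^{n+1}+\tilde k(0)\beta^n .
\]

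For this I would use the auxiliary surface of \cite{KRS}, with a slight modification: an axially symmetric hypersurface $\Gamma\subset B$, namely the surface of revolution of a meridian curve $t\mapsto(1-\rho(t))(\cos t\,\eta+\sin t\,e)$ with a fixed unit vector $e\perp\eta$, touching $S$ at $\eta$, whose profile $\rho$ is chosen so that on each dyadic angular annulus $\{2^{-j-1}\beta<t\le2^{-j}\beta\}$ the quantity $\tilde k(\rho(t))$ is matched to the geometry (concretely, $\rho$ is a power of $t$ on each such annulus). On $\Gamma$ one has the one-sided bound $\tilde u(z)\le\tilde k(1-|z|)=\tilde k(\rho(\phi(z,\eta)))$, and, working in the thin cuspidal domain cut off by $\Gamma$ together with a cap of $S$ and using the explicit estimates of Lemma \ref{l:pki} for the averaged Poisson kernel $\mu$ in place of harmonic-measure estimates for this domain (which would otherwise require the unavailable lower bounds for harmonic measure in cusps), one obtains for each $j$ a bound of the shape
\[
\int_{2^{-j-1}\beta}^{2^{-j}\beta}\varphi_-(t)\sin^{n-1}t\,dt\ \lesssim\ \tilde k(0)\beta_j^{\,n}+\tilde k(\beta_j)\beta_j^{\,n}+(\text{rapidly decaying cross terms}),\qquad \beta_j=2^{-j}\beta,
\]
the term $\tilde k(0)\beta_j^{\,n}$ coming from the part of the boundary cap (where $\varphi\le\tilde k(0)$) and $\tilde k(\beta_j)\beta_j^{\,n}$ from the values of $\tilde u$ on $\Gamma$ at that scale.

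Finally I would sum over $j\ge0$: the first terms give $\lesssim\tilde k(0)\beta^n$; for the second, $\int_{\beta_{j+1}}^{\beta_j}(\tilde k(y)/y)^{1/(n+1)}dy\gtrsim\tilde k(\beta_j)^{1/(n+1)}\beta_j^{\,n/(n+1)}$ together with the convexity of $s\mapsto s^{n+1}$ yields $\sum_j\tilde k(\beta_j)\beta_j^{\,n}\lesssim\big(\int_0^\beta(\tilde k(y)/y)^{1/(n+1)}dy\big)^{n+1}\le D^{n+1}$ — this is the only place \eqref{e:kav.2} is used — while the cross terms sum to a harmless geometric series, giving the displayed reduction and hence \eqref{e:tsae}. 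The main obstacle is the middle step: designing the profile $\rho$ and carrying out the cuspidal-domain estimate purely through the kernel bounds of Lemma \ref{l:pki} while keeping the cross-scale error terms summable; the symmetrization, the passage to the estimate for $\varphi_-$, and the final geometric summations are routine.
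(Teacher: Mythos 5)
Your opening reductions (symmetrization, taking $x_0=\eta$, fibering over latitudes) match the paper, and your final dyadic summation using \eqref{e:kav.2} is essentially the computation that occurs in the paper when $v_a(0)$ is estimated. But the middle step --- the claimed per-scale bound $\int_{\beta_{j+1}}^{\beta_j}\varphi_-(t)\sin^{n-1}t\,dt\lesssim \tilde k(0)\beta_j^n+\tilde k(\beta_j)\beta_j^n+\cdots$ --- is where the entire difficulty lives, and as stated it is both unjustified and, scale by scale, false. The only point at which you have any lower bound on $\tilde u$ is the origin ($\tilde u(0)=0$); there is no a priori control of $\tilde u$ from below at interior points near the boundary, so nothing prevents the negative mass of $\varphi$ from concentrating on a single dyadic annulus. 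Concretely, for a nearly constant admissible $\tilde k$ one can put $\varphi\approx -N$ on the annulus at scale $\beta_j$ and a small positive constant elsewhere, with $N\beta_j^n\sim\tilde k(1)$; this is consistent with all hypotheses, yet $\int_{\text{annulus}_j}\varphi_-\sin^{n-1}\sim\tilde k(1)$ can vastly exceed $\tilde k(0)\beta_j^n+\tilde k(\beta_j)\beta_j^n$. The sum over $j$ survives only because the mass cannot concentrate in \emph{every} annulus simultaneously --- i.e.\ the estimate is intrinsically global, and a local decomposition cannot produce it. Lemma \ref{l:pki} by itself gives two-sided bounds on the averaged Poisson kernel of the \emph{ball}; it does not substitute for the missing lower bound on $\tilde u$ near the boundary, so invoking it ``in place of harmonic-measure estimates'' does not close this step.

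The paper's actual mechanism is different in two respects that your sketch omits. First, the auxiliary surface $\Gamma_a$ is built over the \emph{complement} $a=S\setminus A$, attached along the rim $\partial A$ at angle $\beta$ (not a cusp over the cap at $\eta$), so that the domain $\Omega$ has the whole cap $A$ in its boundary; Lemma \ref{l:surfacelemma} then produces an explicit positive harmonic comparison function $v_a$ with $v_a\gtrsim k\gtrsim\mu(\cdot,\cdot,\beta)$ on $\Gamma_a$ and $v_a(0)\lesssim\lambda^{1/(n+1)}$ after renormalizing so that $D^{n+1}+\tilde k(0)\beta^n$ becomes a small constant $\lambda$. Second, and crucially, the single unknown $K=-\int_A\varphi\,d\sigma_n$ is bounded by an absorption argument: the mean value theorem gives $u_A\ge -C(1+K)\mu(\cdot,\cdot,\beta)$ on $\Gamma_a$, hence $u_a\le C(1+K)v_a$ there, and the maximum principle in $\Omega$ yields $K=u_a(0)\le C(1+K)v_a(0)\le\frac{1}{3}(1+K)$, so $K\le\frac12$. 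It is this self-referential step --- the unknown quantity appearing on both sides and being absorbed because $v_a(0)$ is small --- that replaces the lower bound on $\tilde u$ you would otherwise need, and it has no counterpart in your proposal. Without it (or an equivalent barrier/absorption device), the reduction to a bound on $\int_0^\beta\varphi_-\sin^{n-1}t\,dt$ cannot be completed.
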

\subsection{Theorems \ref{l:lh3} and \ref{t:th2.5} imply Theorem \ref{t:th1}}\label{ss:dth1}
 Fix any positive $\theta\leq \frac{1}{2}$. Let $U$ and $w$ be as in Theorem \ref{t:th1}, and $\alpha$ be defined as in \eqref{e:adf}. The weight we are going to plug into Theorem \ref{l:lh3} is defined as follows
\begin{equation}\notag
\tilde{k}(y) := \frac{w(y+\theta-y\theta)}{w(\theta)\alpha(\theta)^n},\quad 0\leq y\leq 1.
\end{equation}
Indeed, if we apply Lemma \ref{l:lh2}, we obtain
\begin{equation}\notag
\int_{0}^{1}\left(\frac{\tilde{k}(y)}{y}\right)^{\frac{1}{n+1}}\,dy = \int_{0}^{1}\left(\frac{w(y+\theta-y\theta)}{w(\theta)\alpha^n(\theta)y}\right)^{\frac{1}{n+1}}\,dy \leq \left(\frac{n+1}{n} + \frac{40(n+1)}{\delta}\right),
\end{equation}
so we have the condition \eqref{e:kav.2} with $D = \frac{n+1}{n} + \frac{40(n+1)}{\delta}$. Now put $\beta = \alpha(\theta)$. Clearly, $\tilde{k}(0) <\infty$, and 
the function
\begin{equation}\notag
\tilde{u}(z) := \frac{U(z\cdot(1-\theta))}{w(\theta)\alpha^n(\theta)}, \quad |z|\leq 1,
\end{equation}
can be estimated from above
$\tilde{u}(x,y) \leq \tilde{k}(y).$
Theorem \ref{l:lh3} will therefore imply 
\begin{equation}\notag
\int_{\{\phi(x,x_0)\leq \alpha(\theta)\}}\tilde{u}(x,0)\,dx  \geq -C(n)\left(D ^{n+1}+\tilde{k}(0)\alpha^{n}(\theta)\right) \geq -C(n)\left(D ^{n+1}+1\right)
\end{equation}
which means that
\begin{equation}\notag
\frac{1}{\alpha^n(\theta)}\left|\int_{A(\theta,\alpha(\theta))}U(z)\,d\sigma_n(z)\right| \lesssim D^{n+1}w(\theta),
\end{equation}
and we get \eqref{e:wav.2}. The condition \eqref{e:wav.1} will follow from Lemma \ref{l:lh1}.
We arrive to the following inequality
\begin{equation}\notag
\frac{1}{\alpha^n(\theta)}\left|\int_{A(\theta,\alpha(\theta))}U(z)\,d\sigma_n(z)\right| \lesssim \left(\frac{n+1}{n} + \frac{40(n+1)}{\delta}\right)^{n+1}w(\theta),
\end{equation}
which combined with Theorem \ref{t:th2.5} proves Theorem \ref{t:th1}.
\subsection{Theorems \ref{l:lh3} and \ref{t:th2.5} imply Theorems \ref{th:N3}$'$ and \ref{th:N3}}\label{ss:thms23}
Let now $U$ and $w$ be as in Theorem \ref{th:N3}$^\prime$. We fix some $\theta\le \frac12$ and apply Theorem \ref{l:lh3} to $\tilde{u}(z)=U(z(1-\theta))$ and $
\tilde{k}(y)=w(y+\theta-y\theta).$
Then (\ref{cond:int}) implies
\[
\int_0^1\left(\frac{\tilde{k}(y)}{y}\right)^{\frac{1}{n+1}}dy\le \int_{0}^1\left(\frac{w(y(1-\theta))}{y}\right)^{\frac1{n+1}}dy\le 2^{\frac{n}{n+1}}I_0.\]
We obtain
\[
\int_{\{\phi(x,x_0)\le\theta/4\}}\tilde{u}(x,0)dx\ge -C(2^nI_0^{n+1}+w(\theta)4^{-n}\theta^n)\ge -C_1 I_0^{n+1},\]
where $C_1=C_1(n)$. Clearly, $w(y)\le I_0^{n+1}y^{-n}$ since $w$ is decreasing and (\ref{cond:int}) holds.  Hence
\[
\left|\frac{4^n}{\theta^n}\int_{A(\theta,\theta/4)} U(z)d\sigma_n(z)\right|\lesssim I_0^{n+1}\theta^{-n}.\]
Now we apply Theorem \ref{t:th2.5} with the weight $I_0^{n+1}y^{-n}$.  We also take $\alpha(\theta)=\theta/4$. Then \eqref{e:wav.1} and \eqref{e:wav.2} hold with $C_1$ and $C_2$ that depend on $n$ only. Theorem \ref{th:N3}$'$ follows. 

If $U$ is a harmonic function that satisfies the conditions of Theorem \ref{th:N3}, we define 
$\tilde{u}(z)=U(z(1-\theta))$ and  $\tilde{k}(y)=(\theta+y(1-\theta))^{-n}.$
Then $\tilde{k}(0)=\theta^{-n}$  and
\begin{multline*}
\int_0^1\left(\frac{\tilde{k}(y)}{y}\right)^{\frac1{n+1}}dy\le
\int_0^\theta \theta^{-\frac{n}{n+1}}y^{-\frac1{n+1}}dy+\int_\theta^1(1-\theta)^{-\frac{n}{n+1}}y^{-1}dy\\ \le\frac{n}{n+1}+(1-\theta)^{-\frac{n}{n+1}}|\log \theta|.
\end{multline*} 
Applying Theorem \ref{l:lh3}, we get for $\theta\le 1/2$
\[
\left|\frac{4^n}{\theta^n}\int_{A(\theta,\theta/4)}U(z)d\sigma_n(z)\right|\le C(n)\left(1+\left(\log\frac{1}{\theta}\right)^{n+1}\right).\]
Finally we apply Theorem \ref{t:th2.5} with the weight $y^{-n}(1+|\log y|^{n+1})$ and $\alpha(\theta)=\theta/4$.
\subsection{The weight lemma}\label{s:lh3} The aim of the rest of this section is to prove Theorem \ref{l:lh3}.
Before proceeding further we need to introduce some additional notation.
Since $\tilde{u}$ is continuous up to the boundary it has some boundary values which we denote by $\tilde{u}(\cdot,0)$. 
Fix any $\beta \in [0,\frac{\pi}{2}]$ and let
\begin{equation}\notag
A = A(0,\beta) = \{x\in S: \phi(x,\eta)\leq \beta\},\; a = S\setminus A.
\end{equation} 
 The main ingredient in the proof of Theorem \ref{l:lh3} is the following lemma
\begin{lemma}\label{l:surfacelemma}
Let $k:\mathbb{R}_+\mapsto\mathbb{R}_+$ be a strictly decreasing absolutely continuous function such that
\begin{subequations}\label{e:kavl}
\begin{eqnarray}
\label{e:kavl.1}&k(0)  \leq \frac{\lambda}{\beta^n};\\
\label{e:kavl.2}&\int_{0}^{1}\left(\frac{k(y)}{y}\right)^{\frac{1}{n+1}}\,dy \leq \lambda^{\frac{1}{n+1}} 
\end{eqnarray}
\end{subequations}
for some positive $\lambda\leq\frac{1}{3\pi}$.
There exist a domain $\Omega\subset B$  and a positive function $v_a$, harmonic in $\Omega$, such that
\begin{subequations}
\begin{eqnarray}
\label{e:sf.1}&A\subset \partial\Omega,\; 0\in \Omega;\\
\label{e:sf.3}&v_a(0) \leq C(n)\lambda^{\frac{1}{n+1}};\\
\label{e:sf.4}&v_a(x,y) \gtrsim k(y) \gtrsim \mu(x,y,\beta),\quad (x,y)\in \partial\Omega\setminus A, 
\end{eqnarray}
\end{subequations}
where the constants depend only on dimension.
\end{lemma}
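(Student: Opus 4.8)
The plan is to construct $\Omega$ as the region between the sphere $S$ and an auxiliary axially symmetric surface $\Gamma$ sitting inside $B$, built so that near $A$ the surface pinches down to touch $A$ from inside, while staying at macroscopic distance from $0$. Concretely, I would parametrize $\Gamma$ in the $(x,y)$ coordinates of the paper: over the cap $A=A(0,\beta)$ the surface $\Gamma$ is given by $y=\rho(\phi(x,\eta))$ for a suitable profile function $\rho$ with $\rho(\beta)=0$ (so $\Gamma$ meets $A$ along $S(0,\beta)$), $\rho$ increasing as $\phi$ decreases, and $\rho$ matched to a fixed small constant (say $1/2$) away from a neighborhood of $A$ so that $\Omega$ contains the origin with room to spare. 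The profile $\rho$ must be chosen in terms of $k$: roughly one wants the vertical gap $\rho(\phi)$ at angular distance $s=\beta-\phi$ from the edge to satisfy something like $\rho(\phi)\approx $ (a power of) $\int_0^s$-type quantity, so that the integrability condition \eqref{e:kavl.2} translates into the surface being, in an $L^{n+1}$-averaged sense, ``thin enough'' that harmonic measure from $0$ of the part of $\partial\Omega$ away from $A$ is small.

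Second, I would define $v_a$ to be the harmonic function in $\Omega$ with boundary data $0$ on $A$ and equal to a large multiple of (an extension of) $k(y)$, or more precisely of the averaged Poisson kernel $\mu(\cdot,\cdot,\beta)$, on $\partial\Omega\setminus A=\Gamma$. The inequality \eqref{e:sf.4} is then essentially the definition together with Lemma \ref{l:pki}, which already tells us $k(y)\gtrsim \mu(x,y,\beta)$ is the relevant comparison; one checks $\mu(x,y,\beta)\lesssim k(y)$ on $\Gamma$ using that on $\Gamma$ the distance $d$ in Lemma \ref{l:pki} is comparable to the vertical gap, which by construction is comparable to an expression in $k$. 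Positivity of $v_a$ in $\Omega$ is immediate since the boundary data is nonnegative. The substantive estimate is \eqref{e:sf.3}: $v_a(0)$ is the integral of the $\Gamma$-boundary data against harmonic measure of $\Omega$ at $0$, and one must show this is $\lesssim \lambda^{1/(n+1)}$.

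The bound on $v_a(0)$ is where the construction of \cite{KRS} and \cite{R:82} is imitated: decompose $\Gamma$ into dyadic annular pieces $\Gamma_j$ according to the vertical gap $y\sim 2^{-j}$ (equivalently according to the angular distance from the edge $S(0,\beta)$), estimate the harmonic measure $\omega(0,\Gamma_j,\Omega)$ of each piece by a barrier / Carleman-type argument that exploits that to reach $\Gamma_j$ from $0$ a Brownian path must pass through a thin ``channel'' near the sphere, giving a bound like $\omega(0,\Gamma_j,\Omega)\lesssim (\text{gap}_j)^{n}\cdot(\text{something})$, and then sum $\sum_j \sup_{\Gamma_j} k \cdot \omega(0,\Gamma_j,\Omega)$. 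The point of choosing the profile $\rho$ via $k$ and of the integral normalization \eqref{e:kavl.2} is exactly that this sum telescopes into (a constant times) $\int_0^1 (k(y)/y)^{1/(n+1)}\,dy \le \lambda^{1/(n+1)}$, with the extra contribution from the ``cap near $\eta$'' where $k(y)$ is largest controlled by $k(0)\beta^n\le\lambda$ via \eqref{e:kavl.1}; the condition $\lambda\le 1/(3\pi)$ guarantees the profile stays inside $B$ and the channel estimates are in their valid range. I expect the main obstacle to be precisely this harmonic-measure-of-the-channel estimate: making the barrier argument rigorous in $\mathbb{R}^{n+1}$ (the paper explicitly notes the absence of a clean reference for lower, hence also sharp two-sided, harmonic measure estimates in cusp-like domains), getting the dependence on the gap correct, and verifying that the choice of $\rho$ makes the resulting series sum to the integral in \eqref{e:kavl.2} rather than something larger. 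Everything else — matching $\Gamma$ to a constant away from $A$, positivity, the comparison with $\mu$ via Lemma \ref{l:pki} — is routine once the geometry is fixed.
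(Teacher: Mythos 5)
Your overall architecture --- an axially symmetric cusp-like domain $\Omega$ whose boundary consists of the cap $A$ together with a profile surface pinching to the edge circle $S(0,\beta)$, and a positive harmonic $v_a$ comparable to $k$ on that surface --- is the right one, and your idea of calibrating the profile so that $\mu(x,y,\beta)\lesssim k(y)$ there via Lemma \ref{l:pki} matches the paper's choice $\phi(x,\eta)-\beta=(y/k(y))^{1/(n+1)}$. (A minor but real slip: the surface must be a graph over $a=S\setminus A$, with the depth $y$ increasing as $\phi(x,\eta)$ increases away from the edge of $A$; as written, your profile sits over $A$ and is monotone the wrong way, which cannot give $A\subset\partial\Omega$ and $0\in\Omega$ simultaneously.) The genuine gap is exactly where you flag it: the bound \eqref{e:sf.3}. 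Defining $v_a$ by the Dirichlet problem in $\Omega$ makes \eqref{e:sf.4} a tautology but reduces \eqref{e:sf.3} to an upper bound for $\int_{\Gamma}k\,d\omega_{\Omega}^{0}$, i.e.\ to harmonic measure estimates for dyadic pieces of the cusp. You offer only a ``barrier / Carleman-type'' gesture for this, and the paper states explicitly that such estimates are not available in higher dimensions and that the entire point of the construction is to avoid them. Without an explicit barrier, the proof is not complete.

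The paper's workaround is a different definition of $v_a$ that makes \eqref{e:sf.3} elementary and moves the work into \eqref{e:sf.4}: $v_a$ is harmonic in the \emph{whole ball} $B$, with boundary data on the sphere obtained by transplanting $k$ along the profile ($v_a(x,0)=k(y)$ when $\phi(x,\eta)=\gamma(y)$, and $v_a=0$ on $A$). Then $v_a(0)$ is just the mean of this data over $S$, and the change of variables $\gamma\mapsto y$ plus one integration by parts turns it into $C\int_0^1\left(k(y)/y\right)^{1/(n+1)}dy+C\lambda\lesssim\lambda^{1/(n+1)}$, directly from \eqref{e:kavl.1}--\eqref{e:kavl.2}; no harmonic measure of $\Omega$ appears. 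The price is that the first inequality in \eqref{e:sf.4} is no longer automatic: one must prove $v_a(x,y)\gtrsim k(y)$ on $\Gamma_a$ from below, which the paper does by checking $y\le\phi(x,\eta)-\beta$ on $\Gamma_a$ and integrating the Poisson kernel over a boundary set $E(x,y)$ of measure $\sim y^{n}$ on which the monotone axially symmetric data is at least $k(y)$. Note that this globally defined function is precisely the majorant that would justify your harmonic measure bound (it is harmonic in $\Omega$, nonnegative on $A$, and $\gtrsim k$ on $\Gamma_a$), so your route cannot bypass its construction; the missing step in your proposal is, in effect, the paper's proof.
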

This lemma uses the modification of the argument presented in Lemma \textbf{4} in \cite{KRS}. Basically it allows us to estimate the average of the weight $k$ on $\partial\Omega\setminus A$ with respect to the harmonic measure of $\Omega$ at zero. The key point here is the second inequality in \eqref{e:sf.4} which will be used later (in \ref{ss:sf.2})   to obtain the lower bound in \eqref{e:tsae}.

\subsection{Proof of Lemma \ref{l:surfacelemma}: auxiliary surface $\Gamma_a$}
To obtain $\Omega$ we construct its boundary $\partial\Omega = \Gamma_a \bigcup A$. The surface $\Gamma_a$ is defined below, the idea is to construct $\Gamma_a$ on which the second inequality in \eqref{e:sf.4} is satisfied, moreover $k(y) \sim \mu(x,y,\beta)$.

Formally, consider the function $\frac{y}{k(y)}$, which is strictly increasing. Let $s = s(\beta)$ be the solution of the following equation on $y$
\begin{equation}\notag
\frac{y}{k(y)} = \beta^{n+1}.
\end{equation}
Since $k$ is decreasing,  \eqref{e:kavl.1} implies
\begin{equation}\label{e:sbdb}
s = k(s)\beta^{n+1} \leq k(0)\beta^{n+1} \leq \lambda\beta.
\end{equation}
Further, \eqref{e:kavl.2} and the monotonicity of $k$ implies that
\begin{equation}\notag
k^{\frac{1}{n+1}}(1)\int_{0}^1\left(\frac{1}{y}\right)^{\frac{1}{n+1}}\,dy \leq \int_{0}^1\left(\frac{k(y)}{y}\right)^{\frac{1}{n+1}}\,dy \leq \lambda^{\frac{1}{n+1}},
\end{equation}
therefore
$\frac{1}{k(1)} \geq \frac{1}{3\lambda}  \geq \pi.$
Now if we let $\rho = \rho(\beta)$ be the solution of
\begin{equation}\notag
\frac{y}{k(y)} = (\pi-\beta)^{n+1},
\end{equation}
we see that $\rho<1$. Further let
\begin{equation}\label{e:gammaadef}
\begin{split}
&\gamma(y) = \beta + \left(\frac{y}{k(y)}\right)^{\frac{1}{n+1}},\quad s\leq y\leq \rho;\\
&\gamma(y) = \beta + \left(\frac{y}{k(y)\beta^{n-1}}\right)^{\frac{1}{2}},\quad 0\leq y \leq s,
\end{split}
\end{equation}
note that $\gamma(\rho) = \pi$.
The surface $\Gamma_a$ is defined as follows
\begin{equation}\notag
\Gamma_a := \{(x,y): \phi(x,\eta) = \gamma(y), \; x\in a = S\setminus A,\; y \in [0,\rho]\},
\end{equation}
and we define $\Omega$ as the domain bounded by $A\bigcup \Gamma_a$, so that $\Omega$
satisfies \eqref{e:sf.1}.
\subsection{Proof of Lemma \ref{l:surfacelemma}: auxiliary function $v_a$}
We define $v_a$ on the boundary of the unit ball by 
\begin{equation}\label{e:vadef}
\begin{split}
&v_a(x,0) = k(y),\quad (x,y)\in \Gamma_a;\\
&v_a(x,0) = 0, \quad x \in A;
\end{split}
\end{equation}
$v_a$ being the harmonic continuation of $v_a(\cdot,0)$ to the ball. Note that the function $v_a$ is axially symmetric. 
It remains to verify \eqref{e:sf.3} and \eqref{e:sf.4}.

In what follows the letter $C$ denotes a constant, depending only on $n$, which value can change from line to line.
The proof of the first inequality is straightforward if somewhat cumbersome. We have $\gamma^{-1}(\beta) = 0,\,\gamma^{-1}(2\beta) = s$ and it follows from \eqref{e:sprf} that 
\begin{multline*}
v_a(0) = C\int_{\beta}^{\pi}\int_{\phi(x,\eta)=t}v_a(x,0)\,d\sigma_{n-1}(x)\,dt =
C\int_{\beta}^{\pi}k(y(\gamma))\sin^{n-1}\gamma\,d\gamma =\\  C\int_{\beta}^{2\beta}k(y(\gamma))\sin^{n-1}\gamma\,d\gamma +
C\int_{2\beta}^{\pi}k(y(\gamma))\sin^{n-1}\gamma\,d\gamma=\\ C\int_{0}^{s}k(y)\sin^{n-1}(\gamma(y))\gamma'(y)\,dy  + C\int_{s}^{\rho}k(y)\sin^{n-1}(\gamma(y))\gamma'(y)\,dy
\end{multline*}
These two integrals are dealt with more or less in the same way. For the first one we have $\gamma(y) \leq \gamma(s) = 2\beta$ so that $\sin\gamma(y) \leq 2\beta$. Then
\begin{multline*}
\int_{0}^{s}k(y)\sin^{n-1}(\gamma(y))\gamma'(y)\,dy \leq C\beta^{n-1}\int_{0}^{s}k(y)\gamma'(y)\,dy =\\
C\beta^{\frac{n-1}{2}}\int_{0}^{s}y^{-\frac12}\left(k^{\frac{1}{2}}(y) - yk'(y)k^{-\frac12}(y)\right)\,dy\leq\\
C\beta^{\frac{n-1}{2}}\left(\int_{0}^{s}\sqrt{\frac{k(y)}{y}}\,dy + \int_{0}^{s}y^{\frac12}\,dk^{\frac{1}{2}}(y)\right)\leq\\
C\beta^{\frac{n-1}{2}}\sqrt{k(0)s} \leq C\beta^{\frac{n-1}{2}}\sqrt{\lambda^2\cdot\beta^{1-n}} \leq C\lambda,
\end{multline*}
the next to last inequality follows from \eqref{e:kav.1} and \eqref{e:sbdb}. 
Analogously, for the second integral we have $y\in [s,\rho]$ and $\gamma'(y) = \frac{1}{n+1}\left(\frac{y}{k(y)}\right)^{-\frac{n}{n+1}}\frac{k(y)-yk'(y)}{k^2(y)}$, also $\gamma(y) \leq 2\left(\frac{y}{k(y)}\right)^{\frac{1}{n+1}}$. We get
\begin{multline*}
\int_{s}^{\rho}k(y)\sin^{n-1}(\gamma(y))\gamma'(y)\,dy \leq \int_{s}^{\rho}k(y)(\gamma(y))^{n-1}\gamma'(y)\,dy \leq\\
2^{n-1}\int_{0}^1k(y)\frac{k^{-\frac{n-1}{n+1}}(y)y^{\frac{n-1}{n+1}}}{n+1}\left(y^{-\frac{n}{n+1}}k^{-\frac{1}{n+1}}(y) - y^{\frac{1}{n+1}}k^{-\frac{1}{n+1}-1}(y)k'(y)\right)\,dy\leq\\
\frac{2^{n-1}}{n+1}\left(\int_0^1\left(\frac{k(y)}{y}\right)^{\frac{1}{n+1}}\,dy - (n+1)\int_0^1\,y^{\frac{n}{n+1}}\,d(k^{\frac{1}{n+1}}(y)) \right)=\\
\frac{2^{n-1}}{n+1}\left(\int_0^1\left(\frac{k(y)}{y}\right)^{\frac{1}{n+1}}\,dy + n\int_0^1\left(\frac{k(y)}{y}\right)^{\frac{1}{n+1}}\,dy - (n+1)k^{\frac{1}{n+1}}(1)\right) \leq \\
C\int_0^1\left(\frac{k(y)}{y}\right)^{\frac{1}{n+1}}\,dy \leq C\lambda^{\frac{1}{n+1}}.
\end{multline*}
Combining these two estimates, we obtain that
$v_a(0) \leq C(\lambda + \lambda^{\frac{1}{n+1}}) \lesssim \lambda^{\frac{1}{n+1}},$
since $\lambda \leq 1$.

The second part of \eqref{e:sf.4}, i.e. the inequality
$
k(y) \gtrsim \mu(x,y,\beta),$ for $(x,y)\in \Gamma_a
$
follows directly from Lemma \ref{l:pki}. 
Indeed, Lemma \ref{l:pki} implies that for $t\leq \phi(x,\eta) \leq \frac{\pi}{2}$
\begin{equation}\label{e:mueqs}
\mu(x,y,t) \sim \frac{y}{((\phi(x,\eta)- t)^2+y^2)\left(((\phi(x,\eta)- t)^2+y^2)^{\frac{n-1}{2}} + \phi^{n-1}(x,\eta)\right)}.
\end{equation}
If $\beta\leq \phi(x,\eta)\leq 2\beta$ then  clearly $((\phi(x,\eta)- \beta)^2+y^2)^{\frac{n-1}{2}} + \phi^{n-1}(x,\eta) \geq \beta^{n-1}$. Further $(\phi(x,\eta)- \beta)^2 = \frac{y}{k(y)\beta^{n-1}}$ for $(x,y)\in \Gamma_a$, therefore we get
\begin{multline*}
\mu(x,y,\beta) \leq \frac{Cy}{((\phi(x,\eta)- \beta)^2+y^2)\beta^{n-1}}\leq \frac{Cy}{(\phi(x,\eta)- \beta)^2\beta^{n-1}}\leq\\
 \frac{Cyk(y)\beta^{n-1}}{y\beta^{n-1}}\leq Ck(y), \quad(x,y)\in \Gamma_a.
\end{multline*}
It follows from \eqref{e:mueqs} that $\mu(x,y,\beta) \leq \frac{cy}{(\phi(x,\eta)-\beta)^{n+1}}$. For $2\beta\leq \phi(x,\eta)\leq \pi$ we have  $4(\phi(x,\eta)- \beta)^2 \geq \phi^2(x,\eta)$, and therefore due to \eqref{e:gammaadef}
\begin{equation}\notag
\mu(x,y,\beta) \leq \frac{Cy}{(\phi(x,\eta)- \beta)^{n+1}} = \frac{Cy}{(\gamma(y)- \beta)^{n+1}} = \frac{Ck(y)y}{y} = Ck(y),\quad (x,y)\in \Gamma_a.
\end{equation}
To obtain the first part of \eqref{e:sf.4}, we first show that 
\begin{equation}\label{e:ylphb}
y \leq \phi(x,\eta)-\beta
\end{equation}
for $(x,y) \in \Gamma_a$. Indeed, for $2\beta \leq \phi(x,\eta) $ it follows from \eqref{e:kavl.2} and \eqref{e:gammaadef} that
\begin{equation}\notag
\frac{y}{\phi(x,\eta)-\beta} = k^{\frac{1}{n+1}}(y)y^{1-\frac{1}{n+1}} \leq \int_0^{y}\left(\frac{k(\tau)}{\tau}\right)^{\frac{1}{n+1}}\,d\tau \leq \lambda^{\frac{1}{n+1}},
\end{equation}
since $\frac{k(y)}{y}$ is decreasing. If $\beta\leq \phi(x,\eta)\leq 2\beta$ then $y\leq s\leq\lambda\beta$ and \eqref{e:gammaadef} gives
\begin{equation}\notag
\frac{y}{\phi(x,\eta)-\beta} = k^{\frac{1}{2}}(y)\beta^{\frac{n-1}{2}}y^{\frac{1}{2}} \leq k^{\frac{1}{2}}(0)\beta^{\frac{n-1}{2}}s^{\frac{1}{2}}\leq (\lambda\beta^{-n})^{\frac{1}{2}}\beta^{\frac{n-1}{2}}(\lambda\beta)^{\frac{1}{2}} \leq\lambda\leq 1.
\end{equation}
Put $E(x,y) = \{\xi\in S: \phi(\xi,\eta)\leq \phi(x,\eta),\; \phi(x,\xi)\leq y\}$. It follows from \eqref{e:ylphb} that $\sigma_n(E(x,y))\sim y^{n}$ for $(x,y)\in \Gamma_a$ and $E(x,y)\subset a$. Since for such a $\xi$ we have $P_y(\xi,x)\gtrsim\frac{1}{y^n}$, the function $v_a(x,0)$ is axially symmetric and by definition strictly decreasing with respect to $\phi(x,\eta)$ for $\phi(x,\eta)\geq \beta$, we get
\begin{multline*}
v_a(x,y) = \int_{S}v_a(\xi,0)P_y(\xi,x)\,d\sigma_n(\xi) \geq\\ \int_{\{\xi\in a: \phi(\xi,\eta)\leq \phi(x,\eta)\}}v_a(\xi,0)P_y(\xi,x)\,d\sigma_n(\xi)\gtrsim
\int_{E(x,y)}v_a(\xi,0)\frac{1}{y^n}\,d\sigma_n(\xi)\geq\\ \frac{1}{y^n}\int_{E(x,y)}v_a(x,0)\,d\sigma_n(\xi)
\geq \frac{1}{y^n}v_a(x,0) \sigma_n(E(x,y))\gtrsim Ck(y).
\end{multline*}
This completes the proof of Lemma \ref{l:surfacelemma}.
\subsection{Proof of Theorem \ref{l:lh3}}\label{ss:sf.2}
First we renormalize the weight $\tilde{k}$ and the function $\tilde{u}$, let
\begin{equation}\label{e:lambdan}
\begin{split}
&k(y) = \frac{\lambda}{D^{n+1} + \tilde{k}(0)\beta^n} \tilde{k}(y);\\
&u(z) = \frac{\lambda}{D^{n+1} + \tilde{k}(0)\beta^n} \tilde{u}(z),
\end{split}
\end{equation}
where $\lambda = \lambda(n) \leq \frac{1}{3\pi}$ is a small positive constant to be chosen later. 
We may assume that $x_0 = \eta$ and that $\tilde{u}(\cdot,0)$ (and therefore $u(\cdot,0)$) is a axially symmetric function, 
\begin{equation}\notag
u(x,0) = \varphi(|\phi(x,\eta)|),\quad x \in S.
\end{equation}
By $u_A$ and $u_a$ we denote the harmonic continuation to $B$ of the functions $u(\cdot,0)\cdot\chi_A$ and $u(\cdot,0)\cdot\chi_a$ correspondingly.

Clearly $0=u(0) = u_a(0) + u_A(0)$.
Let
\begin{equation}\notag
K = -u_A(0) = -\int_{A}u(x,0)\,d\sigma_n(x),
\end{equation}
we may assume that $K\geq 0$ (otherwise \eqref{e:tsae} is trivial). We see that \eqref{e:kav.1} and \eqref{e:kav.2} imply that the weight $k$ satisfies the conditions \eqref{e:kavl.1} and \eqref{e:kavl.2}. Let $\Gamma_a$ and $v_a$ be like in Lemma \ref{l:surfacelemma}. Our first aim is to prove the following inequality
\begin{equation}\label{e:sf.2}
u_a(x,y) \leq C(1+K)v_a(x,y),\quad (x,y)\in \Gamma_a.
\end{equation}
Since $u_a(\cdot,0)$ is just the part of the boundary values of $u$ that lies in $a$, we have
\begin{equation}\notag
u_a(x,y) = u(x,y) - u_A(x,y) \leq k(y) - u_A(x,y),\quad (x,y) \in \Gamma_a ,
\end{equation}
so to get an upper estimate on $u_a$ we actually need to bound $u_A$ from below on $\Gamma_a$. Again, \eqref{e:sprf} provides us 
with 
\begin{equation}\notag
u_A(x,y)= C(n) \int_{0}^{\beta}\varphi(t)\mu(x,y,t)\sin^{n-1} t\,dt,
\end{equation}
in particular we have
\begin{equation}\label{e:uastr}
u_A(0) = C(n) \int_{0}^{\beta}\varphi(t)\sin^{n-1} t\,dt.
\end{equation}
Clearly $\varphi(t) - k(0) \leq0$, therefore using the mean value theorem (the first one, unlike in \cite{KRS}) we see that there exists $t_0\in [0,\beta]$ such that
\begin{multline*}
\int_{0}^{\beta}\varphi(t)\mu(x,y,t)\sin^{n-1} t\,dt =\\ \int_{0}^{\beta}\left(\varphi(t) - k(0)\right)\mu(x,y,t)\sin^{n-1} t\,dt +  \int_{0}^{\beta}k(0)\mu(x,y,t)\sin^{n-1} t\,dt = \\
\mu(x,y,t_0)\int_{0}^{\beta}\left(\varphi(t) - k(0)\right)\sin^{n-1} t\,dt + k(0)\int_{0}^{\beta}\mu(x,y,t)\sin^{n-1} t\,dt \geq\\
\mu(x,y,t_0)\int_{0}^{\beta}\varphi(t)\sin^{n-1} t\,dt -\mu(x,y,t_0)k(0)\int_{0}^{\beta}\sin^{n-1} t\,dt =\\ \mu(x,y,t_0)\frac{u_A(0)}{C(n)} -\mu(x,y,t_0)k(0)\int_{0}^{\beta}\sin^{n-1} t\,dt,
\end{multline*}
the last equality follow from \eqref{e:uastr}. Now \eqref{e:lambdan} implies that $k(0)\beta^n  \leq \lambda \leq 1$, we also have $\int_{0}^{\beta}\sin^{n-1} t\,dt\sim \beta^{n}$. We continue the estimate, obtaining 
\begin{multline*}
\int_{0}^{\beta}\varphi(t)\mu(x,y,t)\sin^{n-1} t\,dt \geq\mu(x,y,t_0)\left(\frac{u_A(0)}{C(n)} - k(0)\int_{0}^{\beta}\sin^{n-1}t\,dt\right) \geq \\
\mu(x,y,t_0)\left(-\frac{K}{C(n)} - C(\beta)\lambda\right), 
\end{multline*}
where $C(\beta) \sim 1$.
It follows from \eqref{e:mueqs} that 
$\sup_{0\leq t\leq\beta}\mu(x,y,t) \sim \mu(x,y,\beta),$ when $\phi(x,\eta)>\beta.$
We therefore have
\begin{multline*}
u_A(x,y) \geq  C(n)\mu(x,y,t_0)\left(-\frac{K}{C(n)} - C(\beta)\lambda\right) \geq\\
C(n)\mu(x,y,\beta)\left(-\frac{K}{C(n)} - C(\beta)\lambda\right) \geq - \mu(x,y,\beta)\cdot (K+C) \geq \\
-C(n)(K+1)\mu(x,y,\beta).
\end{multline*}
Gathering all the estimates and applying \eqref{e:sf.4}, we get
\begin{equation}\notag
u_a(x,y) \leq k(y) - u_A(x,y) \leq k(y) + C(n)(K+1)\mu(x,y,\beta) \lesssim v_a(x,y)(K+1)
\end{equation}
for $(x,y)\in \Gamma_a$, and we obtain \eqref{e:sf.2}.

Once we have this estimate it is quite easy to finish the proof. Indeed, it follows from \eqref{e:sf.2},\eqref{e:sf.3} and the maximum principle that
\begin{equation}\notag
K = u_a(0) \leq C(1+K)v_a(0) \leq C_0\lambda^{\frac{1}{n+1}}(1+K) \leq \frac{1+K}{3}
\end{equation}
for sufficiently small $\lambda$. Therefore we have
$K \leq \frac12,$ which means that 
\begin{equation}\notag
\int_{A(0,\beta)}\tilde{u}(x,0)\,d\sigma_n(x)  \geq -\frac{1}{2\lambda}\left(D ^{n+1}+\tilde{k}(0)\beta^{n}\right) ,
\end{equation}
and we are done.
\section{Concluding remarks}\label{s:ctrx}
  \subsection{Borichev's conditions}\label{s:bc}
In this subsection we compare our regularity restrictions on the weight with those given in \cite{Bor:03} for two-dimensional results. To do this we quote two following theorems
\begin{theoremmm}\label{t:tb1}
Let $u$ be a harmonic function in the unit disc such that $u(0)=0$ and
$u(z) \leq w(1-|z|),$ when $z\in \mathbb{D}.$
Put $\psi(t) = \log w(e^{-t})$ and assume that
\begin{equation}\label{e:bc.1}
\begin{split}
&1< 1+\varepsilon = \lim \psi'(t) < +\infty;\\
&\psi' \; \text{is of bounded variation on } \mathbb{R}.
\end{split}
\end{equation}
Then
\begin{equation}\label{e:ta}
u(z) \geq -C-\left(o(1)+\cos\left(\frac{\pi}{2+\varepsilon}\right)^{-2-\varepsilon}\right)w(1-|z|),\quad z\in \mathbb{D},
\end{equation}
where $C$ depends only on $w$.
\end{theoremmm}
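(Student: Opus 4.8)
The plan is to treat this two-dimensional statement by the complex-analytic methods of Cartwright and Linden (and, for the sharp constant, of Borichev~\cite{Bor:03}), localizing at a boundary point and transferring the problem to a half-plane. Fix $z_{0}\in\mathbb{D}$ close to $\partial\mathbb{D}$; after a rotation we may take $z_{0}\in(0,1)$, and we push the picture forward by the M\"obius map $\Phi(z)=\frac{1-z}{1+z}$, which carries $\mathbb{D}$ onto the right half-plane $H=\{\mathrm{Re}\,\zeta>0\}$ and sends $0$ to $1$. Then $v:=u\circ\Phi^{-1}$ is harmonic on $H$ with $v(1)=0$, and the growth hypothesis becomes $v(\zeta)\le w(1-|\Phi^{-1}(\zeta)|)$; since $1-|\Phi^{-1}(\zeta)|$ is comparable to $2\,\mathrm{Re}\,\zeta$ near $\zeta=0$, with comparison constants tending to $1$, the restriction reads $v(\zeta)\lesssim w(\mathrm{Re}\,\zeta)$ in a neighbourhood of the relevant boundary point, up to factors that are negligible in the limit.

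Next I would record what the hypotheses say about $w$ itself. From $\psi(t)=\log w(e^{-t})$, $\psi'(t)\to 1+\varepsilon$, and $\psi'\in\mathrm{BV}(\mathbb{R})$ one writes $\psi(t)=(1+\varepsilon)t+\Theta(t)$ with $\Theta'$ of bounded variation and $\Theta'(t)\to0$, hence $\Theta(t)=o(t)$; equivalently $w(s)=s^{-(1+\varepsilon)}e^{\Theta(\log(1/s))}$ with $e^{\Theta(\log(1/s))}=s^{-o(1)}$ as $s\to0$. Thus, up to a factor that perturbs exponents only by $o(1)$, the weight is the pure power $w_{p}(s)=s^{-p}$, $p=1+\varepsilon$. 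I would therefore prove the theorem first for $w=w_{p}$, where everything is explicit, and then perturb.

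For the power weight the relevant comparison function on $H$ is the harmonic function $h(\zeta)=c\,\mathrm{Re}(\zeta^{-p})=c\,|\zeta|^{-p}\cos(p\arg\zeta)$, $\arg\zeta\in(-\pi/2,\pi/2)$: for $c\sim1$ it saturates $h\le w_{p}(\mathrm{Re}\,\zeta)$ along the positive real axis while, after an inessential additive constant, vanishing at $\zeta=1$, and it is the worst case against which an arbitrary $u$ is to be compared. An elementary extremal computation — minimizing $(\cos\phi)^{p}\cos(p\phi)$ over $\phi\in(-\pi/2,\pi/2)$, whose interior critical point is $\phi=\pi/(p+1)=\pi/(2+\varepsilon)$ — shows that on each level set $\{\mathrm{Re}\,\zeta=\mathrm{const}\}$ the extremal ratio between $|h|$ and the majorant is governed by $\cos(\pi/(2+\varepsilon))^{\pm(2+\varepsilon)}$, and this is the origin of the constant in \eqref{e:ta}. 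To obtain the lower bound for a general $u$ I would run a Phragm\'en--Lindel\"of / harmonic-measure argument: the negativity of $u$ at $z_{0}$, spread around by harmonicity, forces a matching amount of positivity elsewhere in order to keep the circular means equal to $u(0)=0$; since that positivity is capped by $w$, comparing with $h$ shows that \eqref{e:ta} cannot fail with a constant exceeding $\cos(\pi/(2+\varepsilon))^{-(2+\varepsilon)}$. Concretely this is organized by estimating the harmonic measure, seen from $\Phi(z_{0})$, of the part of the boundary of a suitable cusp-like subdomain of $H$ on which the majorant is active; for a power weight this harmonic measure reduces asymptotically to the harmonic measure of a sector, which reproduces the stated constant.

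The passage from $w_{p}$ to general $w$ then consists in absorbing the factor $e^{\Theta(\log(1/s))}=s^{-o(1)}$ together with the distortion $|1+\zeta|^{-2}$ of the M\"obius map; both perturb exponents and multiplicative constants by $o(1)$ as $z_{0}\to\partial\mathbb{D}$, which is exactly the role of the $o(1)$ term in \eqref{e:ta}, while the behaviour of $u$ on a fixed compact subset of $\mathbb{D}$ only contributes the additive constant $C$, which may depend on $w$. The main obstacle is making this perturbation quantitative enough to land on the \emph{sharp} constant and not merely on a finite one: one has to check that the optimal angle $\pi/(2+\varepsilon)$ is stable under the $o(1)$ change of exponent and that the Phragm\'en--Lindel\"of comparison closes with an $o(1)$ loss only. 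It is precisely here that the bounded-variation hypothesis on $\psi'$ — rather than just the existence of $\lim\psi'$ — is used: it prevents the active part of the majorant from oscillating on all scales and keeps the harmonic-measure asymptotics under control.
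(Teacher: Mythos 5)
First, a point of order: the paper does not prove this statement. Theorem~A is one of two results that the authors explicitly \emph{quote} from Borichev \cite{Bor:03} in Section~\ref{s:bc}, solely in order to compare his regularity hypotheses \eqref{e:bc.1} with their conditions \eqref{e:als}--\eqref{e:ar}; the only thing the paper establishes in that section is that a higher-dimensional analogue of \eqref{e:bc.1.n} implies the weaker conclusion \eqref{e:wnr} via Theorem~\ref{t:th1}, not the sharp constant in \eqref{e:ta}. So there is no proof in the paper against which your argument can be checked; what you are sketching is a proof of Borichev's theorem itself.

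Judged on its own terms, your sketch has the right classical ingredients (half-plane transfer, the comparison function $\mathrm{Re}\,\zeta^{-p}$, the critical angle $\pi/(p+1)=\pi/(2+\varepsilon)$ obtained by minimizing $(\cos\phi)^{p}\cos(p\phi)$, whose minimal value $-\cos(\pi/(p+1))^{\,p+1}$ is indeed the reciprocal of the constant in \eqref{e:ta}), but two steps are genuinely missing. First, the mechanism that turns the extremal computation into a bound for an \emph{arbitrary} $u$ is not supplied: the ``negativity must be matched by positivity'' heuristic via $u(0)=0$ gives only a crude constant, and a Phragm\'en--Lindel\"of comparison in an unbounded sector or cusp cannot be applied to $u$ directly because $u$ has no a priori lower bound there --- obtaining such a bound is the whole content of the theorem, so the argument as written is circular at its core; one needs either an a priori (non-sharp) estimate first, or a harmonic-measure/Riesz-mass argument as in \cite{L:62,Bor:03}. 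Second, the reduction to the pure power $w_{p}$ is not a harmless perturbation: writing $w(s)=s^{-(1+\varepsilon)}e^{\Theta(\log(1/s))}$ with $\Theta(t)=o(t)$ only says the correction factor is $s^{-o(1)}$, which can be unbounded and can vary by large multiplicative amounts between the scales at which the extremal configuration lives; proving \eqref{e:ta} for $w_{p}$ and then multiplying through does not yield \eqref{e:ta} for $w$ with constant $(1+o(1))\cos(\pi/(2+\varepsilon))^{-2-\varepsilon}$. You correctly point to the bounded-variation hypothesis on $\psi'$ as the tool that should control this, but the quantitative comparison of $w$ at nearby scales (a doubling-type estimate extracted from $\mathrm{var}(\psi')<\infty$) is exactly the part of the proof that is asserted rather than carried out.
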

\begin{theoremmm}\label{t:tb2}
Let $u$ be a harmonic function in the unit disc such that $u(0)=0$ and
$u(z) \leq w(1-|z|),$ when $z\in \mathbb{D}$. Put $\psi(t) = \log w(e^{-t})$ and assume that
\begin{subequations}\label{e:bc.2}
\begin{eqnarray}
\label{e:bc.2.1}&\lim \psi'(t) = +\infty;\\
\label{e:bc.2.2}&|\psi''(t)|  = O\left(|\psi'|^{2-\tilde{\varepsilon}}(t)\right),\quad t\rightarrow+\infty.
\end{eqnarray}
\end{subequations}
for some  positive $\tilde{\varepsilon}$.
Then
\begin{equation}\label{e:tb}
u(z) \geq - C -(1+o(1))w(1-|z|),\quad z\in \mathbb{D}.
\end{equation}
where $C$ depends only on $w$.
\end{theoremmm}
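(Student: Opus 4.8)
These are two--dimensional results proved by complex--analytic means, and that is the route I would take, supplemented by the real--variable reduction developed above. The first move is to localize near a single boundary point, say $z\to 1$, and pass to the strip model: under the conformal map $\zeta=\xi+i\sigma=\log\frac{1}{1-z}$ a neighbourhood of $1$ in $\mathbb D$ is carried, for large $\xi$, onto the strip $\{|\sigma|<\pi/2\}$, with $1-|z|\asymp e^{-\xi}$ (and exactly $1-|z|=e^{-\xi}$ along the radius, which is the case of interest after rotation), so that the hypothesis $u\le w(1-|z|)$ turns into $\tilde u(\zeta)\le e^{\psi(\xi)}$ for the transplanted harmonic function $\tilde u$, with $\psi(t)=\log w(e^{-t})$ as in the statement. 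Exactly as in the proof of Theorem~\ref{t:th2.5}, a Harnack argument reduces the desired pointwise lower bound to a two--sided estimate for averages of $u$ over short arcs at a fixed (logarithmic) distance from the boundary; here one must let the Harnack ball shrink relative to the scale so that the Harnack constant tends to $1$, making the error in this reduction a harmless factor $1+o(1)$.

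The core is the lower bound for those averages. The input is the same as in this paper: from $u(0)=0$ and $u\le w$ one gets for every $r<1$ the $L^1$ bound
\[
\frac{1}{2\pi}\int_0^{2\pi}u^-(re^{i\theta})\,d\theta=\frac{1}{2\pi}\int_0^{2\pi}u^+(re^{i\theta})\,d\theta\le w(1-r),
\]
which plays the role of hypothesis \eqref{e:kav.2} in Theorem~\ref{l:lh3}. I would then run the auxiliary--surface/harmonic--measure construction of Lemma~\ref{l:surfacelemma} in dimension one, but now keeping track of the multiplicative constant. The value $1+o(1)$ emerges from a rescaling: near $\xi=\xi_0$ the function $\psi$ is almost affine, $\psi(\xi)\approx\psi(\xi_0)+\psi'(\xi_0)(\xi-\xi_0)$, and after the further affine substitution $\zeta=\xi_0+\zeta'/\psi'(\xi_0)$ and the normalization $g(\zeta')=e^{-\psi(\xi_0)}\tilde u$ the strip of width $\pi$ opens up --- since $\psi'(\xi_0)\to+\infty$ --- to the whole plane, while the majorant converges to the model exponential $E(\zeta')=e^{\operatorname{Re}\zeta'}$. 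On the plane a harmonic function with $g\le E$ that also stays controlled at the scale linking back to $g\approx0$ cannot fall below $-(1+o(1))E$ near the origin, by a Phragm\'en--Lindel\"of argument; and $-\operatorname{Re}e^{\zeta'}$ shows the value $1$ is attained. Transplanting back yields $u\ge-(1+o(1))w(1-|z|)-C$.

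The regularity hypothesis is exactly what legitimates the linearization: $|\psi''|=O(|\psi'|^{2-\tilde\varepsilon})$ forces $\psi'$ to be essentially constant over $\xi$--intervals of length $\sim|\psi'(\xi_0)|^{\tilde\varepsilon-1}$, on which the affine term $\psi'(\xi_0)(\xi-\xi_0)$ still ranges over $\pm|\psi'(\xi_0)|^{\tilde\varepsilon}\to+\infty$, so $e^{\psi}$ varies by an unbounded factor yet stays within $1+o(1)$ of its affine model on the relevant scale. (In the companion result, Theorem~\ref{t:tb1}, the same scheme applies with $\lim\psi'=1+\varepsilon<\infty$: the cusp then degenerates not to a half--plane but to a sector of half--opening $\pi/(2+\varepsilon)$, and solving the Phragm\'en--Lindel\"of problem in that sector produces the factor $\cos(\pi/(2+\varepsilon))^{-2-\varepsilon}$, with bounded variation of $\psi'$ doing the work that $|\psi''|=O(|\psi'|^{2-\tilde\varepsilon})$ does here.) The main obstacle is precisely the sharpness: the estimates of Section~\ref{ss:stth.1} are only accurate up to absolute constants, so one needs an asymptotically exact version of Lemma~\ref{l:surfacelemma}, and every $o(1)$ must be fed from the regularity of $\psi$; propagating these errors uniformly in the boundary point, and disposing of the harmless additive term $C$ coming from the region where the weight is not yet large, is where the real work lies.
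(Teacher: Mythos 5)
First, a point of fact: this statement is not proved in the paper at all --- it is Borichev's theorem, quoted verbatim from \cite{Bor:03} in the concluding section solely to compare regularity hypotheses; the paper's own results (Theorems \ref{t:th1} and \ref{th:N3}) only produce a constant $C(n,\delta)$, not the sharp factor $1+o(1)$. So there is no proof in the paper to compare yours against, and your sketch has to be judged on its own merits.

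Judged that way, it is a plausible outline of the complex-analytic route, but it contains a concrete gap at the very first reduction. Under $\zeta=\xi+i\sigma=\log\frac{1}{1-z}$ one has $|1-z|=e^{-\xi}$, whereas the hypothesis controls $u$ by $w(1-|z|)$, and $1-|z|\le |1-z|$ with comparability only inside a Stolz angle: near the two edges $|\sigma|\to\pi/2$ of the image region the quantity $1-|z|$ is much smaller than $e^{-\xi}$, so the transplanted majorant $w(1-|z|)$ is much \emph{larger} than $e^{\psi(\xi)}$ (indeed it tends to $+\infty$ at the boundary). Hence the hypothesis does not ``turn into $\tilde u(\zeta)\le e^{\psi(\xi)}$'' on the strip; controlling the possible largeness of $u$ near the edge of the localized region is exactly the difficulty that the cusp-domain and auxiliary-surface constructions (Lemma~\ref{l:surfacelemma} here, or Borichev's conformal maps onto carefully chosen subdomains in \cite{Bor:03}) exist to overcome, and your sketch skips it. Beyond that, the two steps that actually produce the sharp constant --- an asymptotically exact, constant-$1$ version of Lemma~\ref{l:surfacelemma}, and the Phragm\'en--Lindel\"of claim that a plane harmonic limit function $g\le e^{\operatorname{Re}\zeta'}$ with the appropriate normalization must satisfy $g\ge -(1+o(1))e^{\operatorname{Re}\zeta'}$ --- are asserted rather than proved; one needs at least a normal-families argument to justify passing to the plane limit uniformly in the boundary point, plus an identification of the extremal, and you acknowledge that this is ``where the real work lies.'' As it stands this is a program, not a proof; for the actual argument the reader should consult \cite{Bor:03}.
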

As we see, Theorems \ref{t:tb1} and \ref{t:tb2} provide better growth estimates than Theorem \ref{t:th1}, in particular the constant $C$ in \eqref{e:utsgc} is replaced by $(1+o(1))$ in \eqref{e:tb}. We will modify the conditions \eqref{e:bc.1} for the $(n+1)$-dimensional setting and show that they imply \eqref{e:utsgc} (the constant though will not be as nice as in \eqref{e:ta}). Analogously we show that \eqref{e:utsgc} follows from the conditions \eqref{e:bc.2.1} and \eqref{e:bc.2.2} as well, moreover they actually imply regularity conditions \eqref{e:als} and \eqref{e:ar}.
\subsection{Modifying Theorems \ref{t:tb1} and \ref{t:tb2}: regularity conditions in higher dimensions.}
The hypothesis of Theorem \ref{t:tb1} is roughly speaking that $w$ is a regular weight of polynomial growth. The multidimensional version of \eqref{e:bc.1} looks as follows
\begin{equation}\label{e:bc.1.n}
\begin{split}
&n< n+\varepsilon = \lim_{t\rightarrow\infty} \psi'(t)  < +\infty;\\
&\psi' \; \text{is of bounded variation on } \mathbb{R}
\end{split}
\end{equation}
for some $\varepsilon > 0$.
This condition \eqref{e:bc.1.n}, strictly speaking, does not imply \eqref{e:als} and \eqref{e:ar} (in our regularity conditions we are asking for $w$ to be a little bit smoother than in \eqref{e:bc.1}), nevertheless we can still estimate $U$ from below.  
Indeed, put $N = \sup_{t\in\mathbb{R}}\psi'(t)$. We see that
\[
\psi'(t) = -\frac{w'(e^{-t})e^{-t}}{w(e^{-t})},\quad t\in\mathbb{R},
\]
therefore it follows from \eqref{e:bc.1.n} that $-\frac{N}{y} \leq \frac{w'(y)}{w(y)} \leq -\frac{n+\varepsilon}{y},\; 0<y\leq y_0\leq 1$. This situation was already discussed in the section \ref{ss:sc}, and we get \eqref{e:wnr}. 

The multidimensional versions of the regularity and growth conditions in Theorem \ref{t:tb2} are literally the same. We show that \eqref{e:bc.2.1} and \eqref{e:bc.2.2} imply \eqref{e:als} and \eqref{e:ar} near the boundary.
Let us rewrite the conditions \eqref{e:bc.2} in terms of the weight $w$. Since
\[
\begin{split}
&\psi''(t) = \left(\log w(e^{-t})\right)'' = \frac{w''(e^{-t})e^{-2t}}{w(e^{-t})} + \frac{w'(e^{-t})e^{-t}}{w(e^{-t})} - \frac{(w')^2(e^{-t})e^{-2t}}{w^2(e^{-t})} =\\
&\frac{w''(e^{-t})e^{-2t}}{w(e^{-t})} - \psi'(t) + (\psi')^2(t)  = \frac{w''(y)y^2}{w(y)} + \frac{w'(y)y}{w(y)} - \frac{(w')^2(y)y^2}{w^2(y)},\quad e^{-t} = y,
\end{split}
\]
we see that \eqref{e:bc.2.2} is equivalent to 
\begin{multline*}
\limsup_{t\rightarrow\infty}\frac{|\psi''(t)|}{|\psi'|^{2-\tilde{\varepsilon}}(t)} = \lim_{t\rightarrow\infty}\left|\frac{w''(e^{-t})e^{-2t}}{w(e^{-t})} + \psi'(t) - (\psi')^2(t)\right|\cdot|\psi'|^{\tilde{\varepsilon}-2}(t) =\\
\limsup_{t\rightarrow\infty}\left|\frac{w''(e^{-t})e^{-2t}e^{-t(\tilde{\varepsilon}-2)}|w'|^{\tilde{\varepsilon}-2}(e^{-t})}{w^{\tilde{\varepsilon}-2}(e^{-t})w(e^{-t})} + |\psi'|^{\tilde{\varepsilon}-1}(t) - |\psi'|^{\tilde{\varepsilon}}(t)\right| =\\
\limsup_{t\rightarrow\infty}\left|\frac{w''(e^{-t})w(e^{-t})}{w'^{2}(e^{-t})}\cdot\left|\frac{e^{-t}w'(e^{-t})}{w(e^{-t})}\right|^{\tilde{\varepsilon}} + |\psi'|^{\tilde{\varepsilon}-1}(t) - |\psi'|^{\tilde{\varepsilon}}(t)\right| =\\
\limsup_{t\rightarrow\infty}|\psi'|^{\tilde{\varepsilon}}(t)\cdot\left|\frac{w''(e^{-t})w(e^{-t})}{w'^{2}(e^{-t})}-1 + |\psi'|^{-1}(t)\right|\leq C < \infty
\end{multline*}
for some positive $\tilde{\varepsilon}$. Combining this with \eqref{e:bc.2.1} and changing variables we obtain
\begin{equation}\label{e:tmp}
\left|\frac{w''(y)w(y)}{w'^{2}(y)}-1 + o(1)\right| = o\left(\frac{w(y)}{yw'(y)}\right)^{\tilde{\varepsilon}_1},\quad y\rightarrow 0,
\end{equation}
for all positive $\tilde{\varepsilon}_1 \leq \tilde{\varepsilon}$ and
\[
\lim_{y\rightarrow0}\frac{w(y)}{yw'(y)} = 0,
\]
so \eqref{e:als} holds. If we look now at \eqref{e:ar}, we see that it can be rewritten as follows
\begin{equation}\label{e:tmp1}
\frac{w''(y)w(y)}{(w')^2(y)} - 1 \leq \frac{1-\delta}{n}, \quad 0<y\leq 1,
\end{equation}
for some positive $\delta$. Again, \eqref{e:tmp} does not necessarily imply \eqref{e:tmp1} for \textit{all} values of $y\in(0,1]$, but as long as we have \eqref{e:tmp1} for $0<y\leq y_0$ for \textit{some} $y_0$ (see section \ref{ss:sc}), we can still get \eqref{e:wnr}. It follows that the conditions \eqref{e:ar} and \eqref{e:als} are more general than \eqref{e:bc.2}.
\subsection{An example}\label{ss:ex}
The aim of this section is to prove that the estimate in Theorem \ref{th:N3} is the best possible up to a constant. We will construct a harmonic function in the unit ball that satisfies
\[U(z)\le C(1-|z|)^{-n}\quad{\text{and}}\quad U(r)\le -C_1(1-r)^{-n}\left(\log\frac1{1-r}\right)^{n+1}.\]
For $n=1$ the example is relatively simple, one may take \[U(z)=\Re(-(1-z)(\log(1-z))^2),\] see the original work of M. Cartwright \cite{Car:33}. For $n\ge 2$ we will construct an axially symmetric harmonic function, see \cite{W}. 
We are looking for a solution of the following equation
\[
\frac{\partial^2 V}{\partial x^2}+\frac{\partial^2 V}{\partial y^2}+\frac{n-1}{y}\frac{\partial V}{\partial y}=0,\ y>0.\] 
If we do the change of variables $z'=x'+iy'=\rho e^{i\phi}=1-z$ we see that it suffices to construct a function in the domain  $x'>0, y'\ge 0, (x'-1)^2+y'^2\le 1$, that satisfies the equation above. Further, writing it down in polar coordinates, we obtain
\[
\frac{n}{\rho}\frac{\partial V}{\partial \rho}+\frac{\partial^2 V}{\partial \rho^2}+\frac{n-1}{\rho^2}\frac{\partial V}{\partial \phi}\cot \phi+\frac{1}{\rho^2}\frac{\partial^2 V}{\partial \phi^2}=0.\]
We also want $V(t,0)\le -C_1t^{-n}\log^{n+1}\frac1{t}$ and $V(\rho,\phi)\le C\rho^{-n}\cos\phi^{-n}$.  Let us look for $V$ in the following form
\[V(\rho,\phi)=\rho^{-n}\sum_{k=0}^{n+1} v_k(\phi)\left(\log\frac1{\rho}\right)^k.\]
Then
\[
\frac{\partial V(\rho, \phi)}{\partial \rho}=-n\rho^{-1}V(\rho,\phi)-\rho^{-n-1}\sum_{k=0}^n k v_k(\phi)\left(\log\frac1{\rho}\right)^{k-1},\]
\begin{multline*}
\frac{\partial^2 V(\rho, \phi)}{\partial \rho^2}=n(n+1)\rho^{-2}V(\rho, \phi)+\\
\rho^{-n-2}\sum_{k=0}^nkv_k(\phi)\left((2n+1)\left(\log\frac{1}{\rho}\right)^{k-1}+
(k-1)\left(\log\frac1{\rho}\right)^{k-2}\right).
\end{multline*}
This gives the system of equations for $v_k$, $k=0,...,n+1,$
\[
v_k''+(n-1)v_k'\cot\phi+nv_k+(n+1)(k+1)v_{k+1}+(k+2)(k+1)v_{k+2}=0,\]
where $v_{n+2}=v_{n+3}=0$. Applying one more change of variable $v_k(\phi)=f_k(\cos\phi)$, we get
the following system of equations on $f_j(t)$
\begin{equation}\label{eq:sf}
(1-t^2)f_k''-ntf_k'+nf_k+(k+1)((n+1)f_{k+1}+(k+2)f_{k+2})=0.\end{equation}
We will need the auxiliary result that might be standard for the specialists.  
\begin{lemma}\label{l:l5} Let $n\ge 2$ be an integer. For any $r\in C([0,1])$ there exists $f\in C([0,1])$ that solves
\[(1-t^2)f''-ntf'+nf=r(t).\]
\end{lemma}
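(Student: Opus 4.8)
The equation $(1-t^2)f'' - ntf' + nf = r(t)$ is a second-order linear ODE on $[0,1]$ with a regular singular point at $t=1$ (the coefficient of $f''$ vanishes there) and, for $n\ge 2$, an ordinary point at $t=0$. The natural strategy is to produce \emph{two} explicit independent solutions of the homogeneous equation, then invoke variation of parameters. The key observation is that $f(t)=t$ solves the homogeneous equation: indeed $(1-t^2)\cdot 0 - nt\cdot 1 + n\cdot t = 0$. Having one solution in hand, reduction of order gives the second: writing $f_2(t) = t\int^t g(\tau)\,d\tau$, the function $g$ satisfies a first-order linear equation $(1-t^2)\tau g' + \bigl(2(1-t^2) - nt\cdot \tau\bigr)g = 0$ (after substituting and using that $t$ kills the zeroth-order terms), which one integrates explicitly: $g(\tau) = \tau^{-2}(1-\tau^2)^{(n-2)/2}$ up to a constant. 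Thus
\[
f_2(t) = t\int_{?}^{t}\frac{(1-\tau^2)^{(n-2)/2}}{\tau^2}\,d\tau.
\]

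**Handling the singularities of $f_2$.** The integrand $\tau^{-2}(1-\tau^2)^{(n-2)/2}$ is non-integrable at $\tau=0$, so $f_2$ has a pole-type behavior near $0$; but since we only need \emph{some} solution $f\in C([0,1])$ of the inhomogeneous equation, this is harmless as long as the variation-of-parameters formula is set up so the singular contributions cancel. At the endpoint $\tau = 1$ the integrand is integrable precisely because $n\ge 2$ (the exponent $(n-2)/2 \ge 0$), so $f_2$ extends continuously to $t=1$; this is exactly where the hypothesis $n\ge 2$ is used, and it is the reason the lemma is stated with that restriction. I would fix the lower limit of integration at, say, $\tau = 1$, so that $f_2(1) = 0$ and $f_2 \in C((0,1])$ with a mild singularity at $0$.

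**Variation of parameters and the continuity check.** With $f_1(t)=t$ and $f_2$ as above, the Wronskian is $W(t) = f_1 f_2' - f_1' f_2$, and a standard computation (or Abel's formula, since the equation in normal form has first-order coefficient $-nt/(1-t^2)$) gives $W(t) = c\,(1-t^2)^{-n/2}\cdot(\text{something})$; in fact the clean way is $W(t) = f_1(t)^2 g(t) = t^2\cdot \tau^{-2}(1-\tau^2)^{(n-2)/2}\big|_{\tau=t} = (1-t^2)^{(n-2)/2}$. The particular solution is then
\[
f(t) = -f_1(t)\int_{c_1}^{t}\frac{f_2(\tau)r(\tau)}{(1-\tau^2)^{(n-2)/2}}\,d\tau + f_2(t)\int_{c_2}^{t}\frac{f_1(\tau)r(\tau)}{(1-\tau^2)^{(n-2)/2}}\,d\tau,
\]
with the constants $c_1, c_2$ chosen to kill boundary pathologies: take $c_2 = 1$ so the second integral vanishes to order that beats the growth of $f_2$ near $t=1$, and choose $c_1$ (e.g. $c_1 = 0$) so the first term is manifestly continuous on $[0,1)$, while near $t=1$ one checks that $f_1$ is bounded and the integral converges since $r$ is bounded and $(1-\tau^2)^{-(n-2)/2}$ is integrable against the vanishing factors. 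The main obstacle is the bookkeeping at $t=1$: one must verify that the product $f_2(t)\cdot\int_1^t (\cdots)\,d\tau$ tends to a finite limit (both factors vanish, or the divergence of one is dominated by the decay of the other) and, more delicately, that no logarithmic term appears — this is a genuine computation with the indicial equation at the regular singular point $t=1$, whose exponents are $0$ and $(n-2)/2 + 1 = n/2$ for $n$ even versus something requiring care for $n$ odd. I would organize this by changing variable $s = 1-t$ near the endpoint and expanding, confirming that for every integer $n\ge 2$ one genuinely lands in $C([0,1])$; that endpoint analysis is where essentially all the work lies, the rest being the routine variation-of-parameters identity.
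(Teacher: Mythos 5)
Your overall strategy --- take the explicit homogeneous solution $f_1(t)=t$, produce a second solution by reduction of order, and assemble a bounded particular solution by variation of parameters with the integration limits placed at the two endpoints --- is exactly the paper's. But the central computation has the sign of the exponent flipped, and this invalidates the claim on which your whole endpoint analysis rests. Writing $f_2=tv$ and substituting, the zeroth-order terms cancel and one gets $(1-t^2)t\,v''+\bigl(2(1-t^2)-nt^2\bigr)v'=0$, hence $v'(\tau)=C\,\tau^{-2}(1-\tau^2)^{-n/2}$, not $\tau^{-2}(1-\tau^2)^{(n-2)/2}$; equivalently, Abel's formula applied to the normal form $f''-\frac{nt}{1-t^2}f'+\frac{n}{1-t^2}f=0$ gives $W(t)=C(1-t^2)^{-n/2}$, not $(1-t^2)^{(n-2)/2}$. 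The indicial exponents at the regular singular point $t=1$ are the roots of $2\rho(\rho-1)+n\rho=0$, namely $0$ and $1-\tfrac{n}{2}$ --- not $0$ and $\tfrac{n}{2}$ as you state. For $n\ge 2$ the second exponent is $\le 0$, so the second solution is \emph{not} continuous at $t=1$: it blows up like $(1-t)^{1-n/2}$ for $n\ge 3$ and like $\log(1-t)$ for $n=2$ (with an additional logarithm possible for even $n\ge 4$), precisely as the paper's Frobenius analysis records. Your assertion that ``$f_2$ extends continuously to $t=1$\dots this is exactly where the hypothesis $n\ge 2$ is used'' is therefore false, and the continuity check you sketch for the particular solution is built on it.

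The argument can be repaired, and the repair is what the paper actually carries out: with the correct Wronskian the variation-of-parameters integrand against $f_2$ is $f_2(\tau)r(\tau)(1-\tau^2)^{n/2-1}$, which remains integrable up to $\tau=1$ \emph{despite} the blow-up of $f_2$, because the factor $(1-\tau^2)^{n/2-1}$ compensates; and in the term $f_2(t)\int_t^1\tau r(\tau)(1-\tau^2)^{n/2-1}\,d\tau$ the integral is $O((1-t)^{n/2})$, which beats the growth $O((1-t)^{1-n/2})$ of $f_2$ and yields a product of size $O(1-t)\to 0$. That cancellation between a divergent $f_2$ and a vanishing integral is the actual content of the lemma at $t=1$; you have instead asserted (incorrectly) that nothing diverges there, and explicitly deferred the computation that would have exposed the error.
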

\begin{proof} The equation we consider has a regular singular point at $t=1$.
First, we look at two linearly independent solutions of the homogeneous equation. We have $f_1(t)=t$, and, by the Frobenius method (see for example \cite[Chapter 4]{T}),
\[
f_2(t)=\begin{cases} 
t\log(1-t)+ a(t),\ n=2,\\
(1-t)^{1-\frac{n}{2}}b(t),\ n\ {\text{is odd}},\ n\ge 3,\\
At\log(1-t)+(1-t)^{1-\frac{n}{2}}c(t),\ n\ {\text{is even}},\ n\ge 4,
\end{cases}\]
where $a,b,c$ are analytic near $[0,1]$ and $b(1)\neq 0,\ c(1)\neq 0$. Let further $W(t)=f_1f_2'-f_1'f_2$ be the Wronskian of $f_1$ and $f_2$. Then
\[
W(t)=
(1-t)^{-n/2}g(t)
\]
where $g$ is analytic near $[0,1]$ and does not vanish on $[0,1]$ (a simple computation shows that it does not vanish at $1$ since $b$ and $c$ do not vanish, and on $[-\delta,1)$ the functions $f_1, f_2$ are two linearly independent solutions of a second-order linear differential equation with regular coefficients).
The bounded solution that we look for is given by the variation of parameters formula
\[
f(t)=-t\int_0^t r(s)g^{-1}(s)(1-s)^{n/2}f_2(s)ds-f_2(t)\int_t^1 sr(s)g^{-1}(s)(1-s)^{n/2}ds.\]
The first term is continuous on $[0,1]$  since $(1-s)^{n/2}f_2(s)\in C([0,1])$, in the second term  the integral satisfies $I(t)=O((1-t)^{n/2+1}),\ t\rightarrow 1$ and $f_2(t)I(t)\in C([0,1])$. 
\end{proof}

We start by choosing $f_{n+1}(t)=-t$. Then by Lemma there exist bounded functions $f_1,..., f_{n}$ on $[0,1]$ such that (\ref{eq:sf}) holds.  We get
\[\sum_{k=0}^{n}v_k(\phi)\log^{k}(1/\rho)\le M\log^n(1/\rho).\] 
Now if we assume that $V(\rho,\phi)>0$ then $\cos\phi\log(1/\rho)<M$ and 
\begin{multline*}
-\cos\phi\left(\log\frac1{\rho}\right)^{n+1}+\sum_{k=0}^nv_k(\cos\phi)\left(\log\frac1{\rho}\right)^k\le\\
-\cos\phi\left(\log\frac1{\rho}\right)^{n+1}+M\left(\log\frac1{\rho}\right)^n\le
M^{n+1}(\cos\phi)^{-n}.
\end{multline*}
Therefore we obtain the desired estimate $V(\phi,\rho)\le C\rho^{-n}\cos\phi^{-n}$.

\section*{Acknowledgements}
The work was started in the Center of Advanced Study at the Norwegian Academy of Science and Letters in Oslo, and finished in Norwegian University of Science and Technology. We are grateful to both institutions.\par
We would like to thank Philip Rippon for drawing our attention to the article \cite{R:82}. Thanks also go to Mats Ehrnstr\"{o}m for a useful discussion on Lemma \ref{l:l5} with one of the authors.

\end{document}